\newtheorem{theorem}{Theorem}[section]
\newtheorem{lemma}[theorem]{Lemma}
\newtheorem{proposition}[theorem]{Proposition}
\newtheorem{corollary}[theorem]{Corollary}
\numberwithin{equation}{section}
\title{Local density of Caputo-stationary functions \\ of any order\thanks{Supported by
the Australian Research Council Discovery Project 170104880 NEW ``Nonlocal
Equations at Work''. The authors are members of INdAM/GNAMPA.}}
\author{Alessandro Carbotti\thanks{Dipartimento di Matematica
e Fisica, Universit\`a del Salento,
Via Per Arnesano, 73100 Lecce, Italy. {\tt alessandro.carbotti@unisalento.it}}, Serena Dipierro\thanks{Department
of Mathematics and Statistics,
University of Western Australia,
35 Stirling Highway,
Crawley WA 6009, Australia. {\tt serena.dipierro@uwa.edu.au} },
and
Enrico Valdinoci\thanks{Department of Mathematics and Statistics,
University of Western Australia,
35 Stirling Highway,
Crawley WA 6009, Australia, 
and Istituto di Matematica Applicata e Tecnologie Informatiche,
Consiglio Nazionale delle Ricerche,
Via Ferrata 1, 27100 Pavia, Italy,
and Dipartimento di Matematica, Universit\`a degli studi di Milano,
Via Saldini 50, 20133 Milan, Italy. {\tt enrico@mat.uniroma3.it} }}
\begin{document}
\maketitle

\begin{abstract}
We show that any given function can be approximated with arbitrary precision
by solutions of linear, time-fractional equations of any prescribed order.

This extends a recent result by Claudia Bucur, which was obtained
for time-fractional derivatives of order less than one, to the case of any fractional
order of differentiation.

In addition, our result applies also to the $\psi$-Caputo-stationary case,
and it will provide one of the building blocks of a forthcoming paper in 
which we will establish general approximation results by operators of any order
involving anisotropic
superpositions of classical, space-fractional and time-fractional diffusions.
\end{abstract}

\section{Introduction}

The goal of this paper is to show that {\em every function
can be approximated arbitrarily well by functions
that satisfy a homogeneous equation driven by the Caputo derivative
of any positive order}.

When the order of the Caputo derivative
is less than one, this type of statement has been recently proved
in~\cite{MR3716924}. In this sense, our result here
is the extension of the main theorem in~\cite{MR3716924} to
higher order Caputo fractional derivatives.\medskip

More precisely,
we denote by~$\mathbb{N}$ the set of natural numbers
(starting from~$1$) and~$\mathbb{N}_0:=\mathbb{N}\cup\{0\}$
(in this way, $\mathbb{N}=\mathbb{N}_0\setminus\{0\}$).
Following~\cite{MR2379269},
given
\begin{equation}\label{SETTI}
{\mbox{$a\in\mathbb{R}\cup\left\{-\infty\right\}$, $k\in\mathbb{N}$
and~$\alpha\in(k-1,k)$, }}\end{equation}
one defines the Caputo derivative
of initial point $a$ and order~$\alpha$ by
\begin{equation}
\label{defcap}
D_a^{\alpha}u(t):=\frac{1}{\Gamma(k-\alpha)}
\int_a^t \frac{u^{(k)}\left(\tau\right)}{(t-\tau)^{\alpha-k+1}} d\tau,
\end{equation}
where~$\Gamma$ is the Euler's Gamma-Function.

In addition, if~$I\subseteq\mathbb{R}$ is an interval, 
we define the space 
\begin{equation*}
AC^{k-1}(I):=\left\{f\in C^{k-1}(I){\mbox{ s.t. }}f,f',\ldots, f^{(k-1)}\in AC(I)\right\},
\end{equation*}
where $C^{k-1}(I)$ denotes the space of $(k-1)$-times
continuously differentiable functions on $I$,
and $AC(I)$ denotes the space of absolutely continuous functions on $I$.

Given~$t>a$, 
$k\in\mathbb{N}$, $\beta>0$, and~$f:[a,+\infty)\rightarrow\mathbb{R}$,
we also define the function
\begin{equation}\label{CkAM1} (a,t)\ni\tau\,\mapsto\,\Theta_{k,\beta,f,t}(\tau):=
f^{(k)}(\tau)(t-\tau)^{k-\beta-1}\end{equation}
and we set
\begin{equation}\label{CkAM2}\begin{split}
C^{k,\beta}_a\,&:=\Big\{f:\overline{(a,+\infty)}\rightarrow\mathbb{R}
\;\text{ s.t. }\; f\in AC^{k-1}\big(\overline{(a,t)}\big)\\&\qquad\quad \text{and}\quad
\Theta_{k,\beta,f,t}\in L^1\big((a,t)\big),\; {\mbox{ for all }} t>a\Big\}.\end{split}
\end{equation}
We observe that the Caputo derivative in~\eqref{defcap}
is well defined for all~$u$ belonging to~$C^{k,\alpha}_a$.
In this setting, we have the following density result:

\begin{theorem}
\label{dens}
Let $h\in\mathbb{N}_0$, $k\in\mathbb{N}$, and~$\alpha\in(k-1,k)$. 

Then, for every $f\in C^h([0,1])$ and $\epsilon>0$, there exist $a<0$
and $u\in C^{k,\alpha}_a$ such that 
\begin{eqnarray}
\label{CLAIM1}&&D_a^\alpha u(t)=0\quad\text{in}\quad[0,+\infty)\\
\label{CLAIM2}{\mbox{and }}&&\left\|u-f\right\|_{C^h([0,1])}<\epsilon.\end{eqnarray}
\end{theorem}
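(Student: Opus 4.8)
The plan is to reduce~\eqref{CLAIM1} to a moment-matching construction for the $k$-th derivative of $u$. Writing $s:=\alpha-k+1\in(0,1)$, a direct inspection of~\eqref{defcap} shows that $D_a^\alpha u=I_a^{1-s}\big[u^{(k)}\big]$, where $I_a^{1-s}\phi(t):=\frac1{\Gamma(1-s)}\int_a^t\phi(\tau)(t-\tau)^{-s}\,d\tau$ is the Riemann--Liouville fractional integral of order $1-s$ and base point $a$. Thus~\eqref{CLAIM1} is equivalent to requiring that $I_a^{1-s}\big[u^{(k)}\big]$ vanish identically on $[0,+\infty)$. Since polynomials are dense in $C^h([0,1])$ (approximate $f^{(h)}$ uniformly by a polynomial and integrate $h$ times), I would first replace $f$ by a polynomial $P$ with $\|P-f\|_{C^h([0,1])}<\epsilon/2$; moreover, monomials of degree at most $k-1$ are already annihilated by $D_a^\alpha$, so only the part of $P$ of degree $\ge k$ must be reproduced.

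For the construction I would prescribe the \emph{free datum} $g:=I_a^{1-s}\big[u^{(k)}\big]$ on $(a,0)$ and \emph{force} $g\equiv0$ on $[0,+\infty)$, recovering $u^{(k)}=D_a^{1-s}[g]$ by fractional differentiation (inverting $I_a^{1-s}$ via the standard identity $I_a^{1-s}D_a^{1-s}g=g$, valid for the smooth data below). Concretely, I take $g\in C_c^\infty\big((a,-R)\big)$ with a large parameter $R>1$ and $a<-R$; then $I_a^{1-s}\big[u^{(k)}\big]=g=0$ on $[0,+\infty)$ by construction, and for $t>0$ one gets the explicit representation
\[
u^{(k)}(t)=\frac{s-1}{\Gamma(s)}\int_a^0 g(\tau)\,(t-\tau)^{s-2}\,d\tau .
\]
Since $\mathrm{supp}\,g\subseteq(a,-R)$ and $t\in[0,1]$ with $R>1$, expanding $(t-\tau)^{s-2}=|\tau|^{s-2}\big(1+t/|\tau|\big)^{s-2}$ in its convergent binomial series yields
\[
u^{(k)}(t)=\frac{s-1}{\Gamma(s)}\sum_{n\ge0}\binom{s-2}{n}\,m_n\,t^n,\qquad m_n:=\int_a^0 g(\tau)\,|\tau|^{s-2-n}\,d\tau ,
\]
so that on $[0,1]$ the datum determines $u^{(k)}$ as a power series whose coefficients are the weighted moments $m_n$ of $g$.

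It then remains to match $q:=P^{(k)}=\sum_{n=0}^{N-k}b_n t^n$. I would choose $g$ as a combination of finitely many fixed bumps in $(a,-R)$, solving the (generalized Vandermonde) linear system that prescribes $m_n=\frac{\Gamma(s)}{s-1}\binom{s-2}{n}^{-1}b_n$ for $0\le n\le N-k$; this matches the first $N-k+1$ coefficients of $u^{(k)}$ to those of $q$ \emph{exactly}. The remaining coefficients form a tail which, using $|m_n|\le\|g\|_{L^1}R^{s-2-n}$, the convergence of $\sum_n|\binom{s-2}{n}|R^{-n}$ for $R>1$, and the bound $\|g\|_{L^1}\lesssim R^{2-s+(N-k)}$ needed to realize the fixed target moments against the small weights $|\tau|^{s-2-n}\sim R^{s-2-n}$, is seen to be $O(R^{-1})$ in $C^\infty([0,1])$. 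Hence $u^{(k)}\to q$ in $C^{\max(h-k,0)}([0,1])$ as $R\to+\infty$. Fixing the $k$ integration constants so that $u^{(j)}(0)=P^{(j)}(0)$ for $j<k$ gives, on $[0,1]$, $u-P=\frac1{(k-1)!}\int_0^t(t-\tau)^{k-1}\big(u^{(k)}-q\big)\,d\tau$, whence $\|u-P\|_{C^h([0,1])}\lesssim\|u^{(k)}-q\|_{C^{\max(h-k,0)}([0,1])}\to0$; choosing $R$ large (so $a$ sufficiently negative) then yields~\eqref{CLAIM2}.

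The main obstacle is precisely this quantitative moment problem: the solvability of the Vandermonde system and, above all, the competition as $R\to+\infty$ between the growth of $\|g\|_{L^1}$ forced by the small weights and the decay of the high-order contributions that must kill the tail. A secondary, more routine point is to verify the membership $u\in C^{k,\alpha}_a$, i.e.\ $u\in AC^{k-1}$ and $\Theta_{k,\alpha,u,t}\in L^1\big((a,t)\big)$ for every $t>a$; this follows from $g\in C_c^\infty\big((a,-R)\big)$, which makes $u^{(k)}$ smooth and bounded on compacta (vanishing near $\tau=a$), while $s<1$ takes care of the integrability at $\tau=t$.
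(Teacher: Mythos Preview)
Your approach is correct in outline and genuinely different from the paper's. Both arguments first reduce, via Stone--Weierstra{\ss}, to approximating polynomials, and both note that monomials of degree $<k$ are trivially Caputo-stationary. From there the paths diverge. The paper builds one explicit solution $\psi$ with the boundary expansion $\psi(1+\epsilon)=\kappa\epsilon^{\alpha}+o(\epsilon^{\alpha})$, rescales it to produce a sequence $v_j\to\kappa t^{\alpha}$, and then runs a \emph{spanning argument}: the set of jets $(v(t),\dots,v^{(m)}(t))$ over all Caputo-stationary $v$ and $t>0$ is a linear subspace of $\mathbb{R}^{m+1}$, and a putative orthogonal vector would force $\sum c_i(t^{\alpha})^{(i)}\equiv0$, impossible because $\alpha\notin\mathbb{N}$ makes each coefficient $\alpha(\alpha-1)\cdots(\alpha-i+1)$ nonzero. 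One then picks $v$ with a prescribed jet at a point $p$ and rescales to hit a single monomial. Your route instead exploits the Abel-integral structure $D_a^{\alpha}u=I_a^{1-s}[u^{(k)}]$ directly: choosing $g\in C_c^{\infty}\big((a,-R)\big)$ and $u^{(k)}=D_a^{1-s}g$ forces $D_a^{\alpha}u=g=0$ on $[0,+\infty)$, and a binomial expansion of $(t-\tau)^{s-2}$ turns $u^{(k)}$ on $[0,1]$ into a power series whose coefficients are weighted moments $m_n$ of $g$; matching the first $N-k+1$ of them via a (generalized) Vandermonde system reproduces $P^{(k)}$ up to a tail that you correctly estimate as $O(R^{-1})$.

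What each buys: the paper's spanning method is the more portable one---it is the same mechanism used for the fractional Laplacian and for the mixed operators in the companion works the paper cites, and it requires only the existence of a single solution with $t^{\alpha}$-type boundary behavior. Your construction is more explicit and self-contained, tailored to the one-dimensional Abel structure; it sidesteps the blow-up/spanning machinery entirely, at the price of the quantitative linear-algebra step you flag. To turn your sketch into a full proof you should (i) fix the bump locations at a fixed dilation of $R$, e.g.\ $x_j=-\alpha_jR$ with distinct constants $\alpha_j>1$, so that after the rescaling $y_j=|x_j|^{-1}=\alpha_j^{-1}R^{-1}$ the moment matrix is $R^{s-2-n}$ times a fixed invertible (perturbed) Vandermonde in the $\alpha_j^{-1}$; this is what actually yields $\|g\|_{L^1}\lesssim R^{2-s+(N-k)}$ with a constant independent of $R$, and (ii) note that $\binom{s-2}{n}\ne0$ for all $n$ (since $s-2\in(-2,-1)$), so the target moments are well-defined. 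The $C^{k,\alpha}_a$ membership is indeed routine: repeated integration by parts (using that all derivatives of $g$ vanish at the endpoints of its support) shows $u^{(k)}=D_a^{1-s}g$ is $C^{\infty}$ on $[a,+\infty)$ and vanishes near $a$, whence $u\in C^{\infty}\big([a,+\infty)\big)$ and $\Theta_{k,\alpha,u,t}\in L^1$ since $s<1$.
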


Theorem~\ref{dens} lies in the research line
of approximation results with solutions of nonlocal equations.
The first result in this direction was obtained in~\cite{MR3626547},
where it was established that any given function can be locally
approximated with arbitrary precision by functions with vanishing
fractional Laplacian. This result has been also extended
in~\cite{DSV1}
to take into account also evolution equations, and in general equations
which contain different types of diffusion in different coordinate variables.

When~$\alpha\in(0,1)$, Theorem~\ref{dens} has been recently
obtained in~\cite{MR3716924}. Furthermore,
Theorem~\ref{dens} will constitute one of the main building blocks
for the forthcoming paper~\cite{FUTURO},
in which we will establish a very general density result
for solutions of operators taking into account both classical
and fractional derivatives of any order and of both time-fractional
and space-fractional types.
\medskip

We also remark that Caputo derivatives possess a number
of concrete applications in describing processes
with memory, see e.g.~\cite{MR1918790} and the references therein,
hence we think that it is quite interesting that the set of solutions
of linear Caputo-type equations is shown by Theorem~\ref{dens}
to be so abundant to shadow the profile of any prescribed function, also
independently on any geometric constraint.\medskip

The proof of Theorem~\ref{dens} will rely on an appropriate ``derivative spanning''
method introduced in~\cite{MR3626547}. Roughly speaking,
the nonlocal effect produced by the operator causes a fractional
type boundary behavior which is persistent for all the derivatives
of the solutions. Then, this phenomenon in turn implies that the derivative jet
at a given point is essentially arbitrary, and the desired
result follows by rescaling.

To make such argument work, one needs
to construct a suitable solution with a very precise control on every boundary derivatives.
In our setting, this goal will be achieved by a careful analysis
of the linear equation, in terms of explicit representation formulas
and asymptotic analysis.\medskip

It is also interesting to point out that a simple variant of Theorem~\ref{dens}
comprises the case of more general
nonlocal operators of time-fractional type. To describe
this generalized setting, we consider the functional space
\begin{equation}\label{DEPSI}
\Psi_k:=\Big\{
\psi\in C^k({\mathbb{R}}) {\mbox{ s.t.
$\psi'(x)> 0$ for any~$x\in{\mathbb{R}}$}}
\Big\}.
\end{equation}
Given~$\psi\in\Psi_k$,
following~\cites{MR3554830, MR3072517},
one can introduce a time-fractional derivative
with respect to $\psi$ and initial point~$a\in{\mathbb{R}}\cup\{-\infty\}$, 
defined by
\begin{equation}\label{caputotype}
D_a^{\alpha,\psi} u\left(t\right):=\frac{1}{\Gamma\left(k-\alpha\right)}\int_a^t {\frac{u^{\left(
k\right)}_\psi\left(\tau\right)}{\left(\psi\left(t\right)-\psi\left(\tau\right)\right)^{\alpha-k+1}} \psi'\left(\tau\right)\,d\tau}
\end{equation}
where 
\begin{equation*}
u^{(k)}_\psi(\tau)
:=\left(\frac{1}{\psi'(\tau)}\frac{d}{d\tau}\right)^{k}u(\tau)=
\underbrace{
\left(\frac{1}{\psi'(\tau)}\frac{d}{d\tau}\right)\cdots \left(\frac{1}{\psi'(\tau)}\frac{d}{d\tau}\right)
}_{{\mbox{$k$ times}}}
u(\tau).
\end{equation*}
The setting in~\eqref{caputotype} 
comprises, as a particular case,
the Caputo
fractional derivative
given by \eqref{defcap}
(notice indeed that~\eqref{caputotype} reduces to~\eqref{defcap}
when~$\psi(t):=t$).\medskip

In this context, we obtain 
from Theorem~\ref{dens} that $\psi$-Caputo-stationary
functions are locally dense, in the sense made precise
by the following result:

\begin{corollary}
\label{densepsi} Let~$h\in\mathbb{N}_0$, $k\in\mathbb{N}$, and
$\alpha\in(k-1,k)$. Let also~$\psi\in\Psi_k\cap C^h\left([0,1]\right)$.

Assume that
\begin{equation}\label{CPSI}
\lim_{t\to-\infty}\psi(t)=-\infty.\end{equation}
Then,
for any~$\epsilon>0$ and any~$f\in C^h\left([0,1]\right)$
there exist an initial point~$ a \in(-\infty,0)$
and a function~$u\in C^{k,\alpha}_a$ which satisfies
\begin{eqnarray*}&&
D^{\alpha,\psi}_a u(t) = 0 \quad {\mbox{ for any }} t\in[0,+\infty)\\
{\mbox{and }}&& \|u - f\|_{C^h\left([0,1]\right)}<\epsilon.
\end{eqnarray*}\end{corollary}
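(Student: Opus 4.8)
The plan is to reduce Corollary~\ref{densepsi} to Theorem~\ref{dens} by the change of variable that straightens the weight~$\psi$. Since~$\psi\in\Psi_k$ has~$\psi'>0$, it is a strictly increasing $C^k$-diffeomorphism of~$\mathbb{R}$ onto its image, so I look for~$u$ of the form~$u=v\circ\psi$, i.e. I set~$v:=u\circ\psi^{-1}$. The first step is to verify, by induction on the number of applications of~$\frac{1}{\psi'}\frac{d}{d\tau}$, the chain-rule identity~$u^{(k)}_\psi(\tau)=v^{(k)}\big(\psi(\tau)\big)$, whose base case is~$\frac{1}{\psi'(\tau)}\frac{d}{d\tau}\big(v(\psi(\tau))\big)=v'(\psi(\tau))$ and whose inductive step is identical. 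Plugging this into~\eqref{caputotype} and substituting~$s=\psi(\tau)$, so that~$\psi'(\tau)\,d\tau=ds$ and~$u^{(k)}_\psi(\tau)=v^{(k)}(s)$, collapses the~$\psi$-Caputo derivative into an ordinary Caputo derivative of~$v$ read at~$\psi(t)$:
\begin{equation*}
D^{\alpha,\psi}_a u(t)=\frac{1}{\Gamma(k-\alpha)}\int_{\psi(a)}^{\psi(t)}\frac{v^{(k)}(s)}{(\psi(t)-s)^{\alpha-k+1}}\,ds=\big(D^\alpha_{\psi(a)}v\big)\big(\psi(t)\big).
\end{equation*}
In particular~$D^{\alpha,\psi}_a u=0$ on~$[0,+\infty)$ is equivalent to~$D^\alpha_{\psi(a)}v=0$ on~$\psi([0,+\infty))\subseteq[\psi(0),+\infty)$.

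The task thus becomes a density statement for the \emph{ordinary} Caputo derivative, but on the interval~$[\psi(0),\psi(1)]$ rather than on~$[0,1]$. To match Theorem~\ref{dens} I affinely rescale: writing~$p:=\psi(0)$, $q:=\psi(1)$ and~$\Phi(r):=p+(q-p)r$, the substitution above applied to an affine map yields the scaling rule~$D^\alpha_b(w\circ\Phi^{-1})(\sigma)=(q-p)^{-\alpha}\,(D^\alpha_{a_0}w)\big(\Phi^{-1}(\sigma)\big)$ with~$b=\Phi(a_0)$, which shows that the zero set of the Caputo derivative is preserved under affine maps. Concretely, I apply Theorem~\ref{dens} to~$F:=(f\circ\psi^{-1})\circ\Phi\in C^h([0,1])$ with a precision~$\epsilon'$ to be fixed, obtaining~$a_0<0$ and~$w\in C^{k,\alpha}_{a_0}$ with~$D^\alpha_{a_0}w=0$ on~$[0,+\infty)$ and~$\|w-F\|_{C^h([0,1])}<\epsilon'$; then~$v:=w\circ\Phi^{-1}$ satisfies~$D^\alpha_b v=0$ on~$[p,+\infty)$ with initial point~$b:=p+(q-p)a_0<p$. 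Finally, hypothesis~\eqref{CPSI} (together with the continuity and strict monotonicity of~$\psi$) guarantees that~$\psi$ maps~$(-\infty,0)$ onto~$(-\infty,p)$, so there is a unique~$a\in(-\infty,0)$ with~$\psi(a)=b$; this is the initial point of the corollary, and~$u:=v\circ\psi$ is the candidate function, which by the first paragraph satisfies~$D^{\alpha,\psi}_a u=0$ on~$[0,+\infty)$.

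It then remains to transfer the approximation and to check the regularity. For the approximation I use~$f=\tilde f\circ\psi$ with~$\tilde f:=f\circ\psi^{-1}\in C^h([p,q])$ (well defined and of class~$C^h$ because~$\psi'>0$ makes~$\psi^{-1}\in C^h$), so that~$u-f=\big(v-\tilde f\big)\circ\psi$ on~$[0,1]$; a Faà di Bruno estimate bounds~$\|u-f\|_{C^h([0,1])}$ by a constant depending only on~$h$ and~$\|\psi\|_{C^h([0,1])}$ times~$\|v-\tilde f\|_{C^h([p,q])}$, and the latter is controlled by~$\|w-F\|_{C^h([0,1])}<\epsilon'$ up to a constant coming from the fixed affine map~$\Phi$, so choosing~$\epsilon'$ small enough gives~$\|u-f\|_{C^h([0,1])}<\epsilon$. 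The step I expect to be the most delicate is checking that~$u=v\circ\psi$ genuinely belongs to~$C^{k,\alpha}_a$, since this space is defined through the \emph{ordinary} derivative~$u^{(k)}$ rather than~$u^{(k)}_\psi$: here one expands~$u^{(k)}$ via Faà di Bruno, its leading term being~$v^{(k)}(\psi)(\psi')^k$ and the remaining terms involving only~$v^{(j)}(\psi)$ with~$j<k$, which are continuous hence bounded on compacta, and one transfers the integrability~$\Theta_{k,\alpha,v,\sigma}\in L^1$ granted by~$v\in C^{k,\alpha}_b$ through the substitution~$s=\psi(\tau)$, exploiting that~$\psi\in C^k(\mathbb{R})$ with~$\psi'$ bounded and bounded away from~$0$ on the relevant compact intervals. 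The same computation simultaneously shows that the~$\psi$-Caputo derivative of~$u$ is well defined. These verifications, though somewhat lengthy, are routine once the change-of-variable identity of the first paragraph is in place.
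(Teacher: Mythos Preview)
Your proposal is correct and follows essentially the same route as the paper: the change-of-variable identity $D^{\alpha,\psi}_a u(t)=(D^\alpha_{\psi(a)}(u\circ\psi^{-1}))(\psi(t))$ is exactly the paper's Lemma~\ref{capty}, the pull-back $u=v\circ\psi$ with $v$ produced by Theorem~\ref{dens}, the use of~\eqref{CPSI} to find~$a$ with~$\psi(a)=b$, and the Fa\`a di Bruno bound on $\|u-f\|_{C^h([0,1])}$ all match the paper's proof. You are in fact more explicit on two points the paper glosses over: the affine rescaling $\Phi$ needed to invoke Theorem~\ref{dens} (which is stated on $[0,1]$, not on $[\psi(0),\psi(1)]$), and the verification that $u\in C^{k,\alpha}_a$ via Fa\`a di Bruno together with the comparability $(t-\tau)^{k-\alpha-1}\asymp(\psi(t)-\psi(\tau))^{k-\alpha-1}$ on compacta.
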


In the forthcoming Section~\ref{9ikHNA:SP}, we will prove
Theorem~\ref{dens}. {F}rom this, we will derive the proof
of Corollary~\ref{densepsi} in Section~\ref{9ikHNA:SP2}.

\section{Proof of Theorem~\ref{dens}}\label{9ikHNA:SP}

\subsection{Existence, uniqueness and regularity of solutions of time-fractional equations}

The proof of Theorem~\ref{dens} relies on a series of auxiliary results
(here, if not specified, we always implicitly
suppose that the setting in~\eqref{SETTI}
is assumed).
We start with an equivalent formulation of time-fractional
equations which highlights the role played by the memory effect:
roughly speaking, solving a homogeneous
time-fractional equation with some
given initial data at~$t=a$ is equivalent to solving
a non-homogeneous
time-fractional equation with initial time~$t=b>a$,
and the non-homogeneous source in the equation takes into account
the memory effect of the period of time~$t\in[a,b]$.
The precise result that we need is the following:

\begin{lemma}
\label{equiv}
Let $b>a$ and~$\varphi\in C^k([a,b])$.
Then, $u\in C^{k,\alpha}_a$ satisfies the equation
\begin{equation*}
\begin{cases}
D^{\alpha}_a u(t)=0&\quad\text{in}\quad(b,+\infty) ,\\
u(t)=\varphi(t)&\quad\text{in}\quad(-\infty,b],
\end{cases}
\end{equation*}
if and only if it satisfies
\begin{equation*}
\begin{cases}
D^{\alpha}_b u(t)=g(t)&\quad\text{in}\quad(b,+\infty) ,\\
u(t)=\varphi(t)&\quad\text{in}\quad(-\infty,b],
\end{cases}
\end{equation*}
where
\begin{equation}
\label{gi}
g(t):=-\frac1{\Gamma(k-\alpha)}\int_a^b \frac{\varphi^{(k)}(\tau)}{(t-\tau)^{\alpha-k+1}} d\tau.
\end{equation}
\begin{proof}
First of all, we observe that the function $g$ in~\eqref{gi}
is well defined, since
\begin{equation*}
\Gamma(k-\alpha)\,|g(t)|\leq\int_a^b \frac{|\varphi^{(k)}(\tau)|}{(t-\tau)^{\alpha-k+1}} d\tau\leq\sup_{\tau\in[a,b]}|\varphi^{(k)}(\tau)|\frac{(t-a)^{k-\alpha}-(t-b)^{k-\alpha}}{k-\alpha},
\end{equation*}
which is finite for any $t\in[b,+\infty)$. 

Using the definition in~\eqref{defcap}, we have that
\begin{align*}
D^\alpha_a u(t)&=\frac1{\Gamma(k-\alpha)}
\int_b^t u^{(k)}(\tau)(t-\tau)^{k-\alpha-1} d\tau+\frac1{\Gamma(k-\alpha)}\int_a^b u^{(k)}(\tau)(t-\tau)^{k-\alpha-1} d\tau \\
&=\frac1{\Gamma(k-\alpha)}\int_b^t u^{(k)}(\tau)(t-\tau)^{k-\alpha-1} d\tau+\frac1{\Gamma(k-\alpha)}\int_a^b \varphi^{(k)}(\tau)(t-\tau)^{k-\alpha-1} d\tau\\
&=D^\alpha_b u(t)-g(t).
\end{align*}
{F}rom this, the desired result plainly follows.
\end{proof}
\end{lemma}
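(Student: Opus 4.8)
The plan is to prove Lemma~\ref{equiv} by a direct computation that splits the defining integral of the Caputo derivative at the intermediate point~$t=b$. The whole statement is an equivalence between two boundary-value problems that share the same prescribed data $u=\varphi$ on~$(-\infty,b]$, so the only thing that can differ is the equation satisfied on~$(b,+\infty)$; I would therefore concentrate entirely on relating $D^\alpha_a u(t)$ to $D^\alpha_b u(t)$ for $t>b$ under the assumption that $u=\varphi$ up to time~$b$.

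\emph{Preliminary well-posedness.} First I would check that the source term $g$ in~\eqref{gi} is a well-defined finite function on $(b,+\infty)$, since otherwise the non-homogeneous problem is meaningless. Because $\varphi\in C^k([a,b])$, the integrand $\varphi^{(k)}(\tau)(t-\tau)^{k-\alpha-1}$ is bounded by a constant times $(t-\tau)^{k-\alpha-1}$ on the compact interval $[a,b]$, and for $t>b$ this is integrable in~$\tau$ (the singularity at $\tau=t$ lies outside $[a,b]$). A crude bound, integrating the weight explicitly, gives
\begin{equation*}
\Gamma(k-\alpha)\,|g(t)|\le \sup_{[a,b]}|\varphi^{(k)}|\cdot\frac{(t-a)^{k-\alpha}-(t-b)^{k-\alpha}}{k-\alpha},
\end{equation*}
which is finite for every $t\ge b$; note that $k-\alpha\in(0,1)$ makes the antiderivative of the weight behave like a positive power.

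\emph{Splitting the integral.} The core step is simply to write, using the definition~\eqref{defcap} with initial point~$a$, the identity
\begin{equation*}
D^\alpha_a u(t)=\frac{1}{\Gamma(k-\alpha)}\int_b^t u^{(k)}(\tau)(t-\tau)^{k-\alpha-1}\,d\tau+\frac{1}{\Gamma(k-\alpha)}\int_a^b u^{(k)}(\tau)(t-\tau)^{k-\alpha-1}\,d\tau.
\end{equation*}
On the second piece I would substitute $u=\varphi$ (hence $u^{(k)}=\varphi^{(k)}$) on $[a,b]$, recognizing that integral as exactly $-g(t)$; the first piece is precisely $D^\alpha_b u(t)$. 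This yields $D^\alpha_a u(t)=D^\alpha_b u(t)-g(t)$ for all $t>b$, from which both implications of the ``if and only if'' follow at once, since the boundary condition $u=\varphi$ on $(-\infty,b]$ is literally identical in the two formulations.

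The computation is routine and I do not expect a genuine obstacle, but the one point requiring care is the legitimacy of the splitting itself: one must know that $u^{(k)}(\tau)(t-\tau)^{k-\alpha-1}$ is integrable on the whole of $(a,t)$ so that the integral may be decomposed additively. This is guaranteed precisely by the membership $u\in C^{k,\alpha}_a$, which by the definition in~\eqref{CkAM2} asserts $\Theta_{k,\alpha,u,t}\in L^1((a,t))$; I would invoke this to justify both that $D^\alpha_a u(t)$ is well defined and that the split into the two subintervals is valid. Once integrability is secured, the argument is complete.
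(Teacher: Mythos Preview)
Your proposal is correct and follows essentially the same approach as the paper: verify that $g$ is finite via the sup-norm bound on $\varphi^{(k)}$, then split the defining integral of $D^\alpha_a u$ at $b$ and identify the two pieces as $D^\alpha_b u(t)$ and $-g(t)$. Your explicit remark that the membership $u\in C^{k,\alpha}_a$ is what legitimizes the additive splitting is a welcome clarification that the paper leaves implicit.
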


Next lemma gives a representation formula
for the solutions of time-fractional equations of any order
(when the order is below $1$, such a result
is related to Volterra-type integral equations,
and we provide the details for any order for the facility of the reader).
The main representation formula will be given in the forthcoming Lemma~\ref{VOLT}.
To this end, we present some ancillary observation on the derivatives
of integral identities:

\begin{lemma}\label{ANCD}
Let~$g\in C_a^{k,k-\alpha}$, and let, for any~$t\ge a$,
$$ v(t):=\int_a^t g(\tau)(t-\tau)^{\alpha-1} d\tau.$$
Then, $v\in AC^{k-1}([a,+\infty))$, and, for any~$t>a$,
\begin{equation}\label{Cva} v^{(k)}(t)=
\int_a^t g^{(k)}(\tau)(t-\tau)^{\alpha-1}d\tau+
\sum_{i=0}^{k-1} 
\frac{(\alpha+i)\dots(\alpha+i-k+1)\,g^{(i)}(a)}{\alpha(\alpha+1)\dots(\alpha+i)}
\;(t-a)^{\alpha+i-k}.\end{equation}

\begin{proof} Using recursively
the Fundamental Theorem of Calculus for absolutely continuous functions
(see e.g. Theorem~7.6 in~\cite{MR3379909}), we can write
\begin{equation}\label{76}
g(t):=\frac{1}{(k-1)!}\int_a^t g^{(k)}(\sigma)(t-\sigma)^{k-1} \,d\sigma+
\sum_{i=0}^{k-1} \frac{g^{(i)}(a)}{i!}(t-a)^i.
\end{equation}
As a result, we can write
\begin{equation}\label{FUb}\begin{split}&
v(t)=\frac1{(k-1)!}\int_a^t 
\left[\int_a^\tau g^{(k)}(\sigma)(\tau-\sigma)^{k-1} \,d\sigma\right]
(t-\tau)^{\alpha-1} \,d\tau\\&\qquad\qquad\qquad+
\sum_{i=0}^{k-1}\frac{g^{(i)}(a)}{i!} \int_a^t (\tau-a)^i(t-\tau)^{\alpha-1}\,d\tau.
\end{split}\end{equation}
We also remark that, for every~$\sigma\in[a,t)$,
\begin{equation}\label{OJLSNNDND}
\begin{split}
& \int_\sigma^t (\tau-\sigma)^{k-1} (t-\tau)^{\alpha-1} \,d\tau
=-\frac1\alpha\int_\sigma^t (\tau-\sigma)^{k-1} \frac{d}{d\tau}(t-\tau)^{\alpha} \,d\tau
\\&\qquad=
\frac{k-1}\alpha\int_\sigma^t (\tau-\sigma)^{k-2} (t-\tau)^{\alpha} \,d\tau=
-\frac{k-1}{\alpha(\alpha+1)}
\int_\sigma^t (\tau-\sigma)^{k-2} \frac{d}{d\tau}(t-\tau)^{\alpha+1} \,d\tau\\
&\qquad=
\frac{(k-1)(k-2)}{\alpha(\alpha+1)}
\int_\sigma^t (\tau-\sigma)^{k-3} (t-\tau)^{\alpha+1} \,d\tau=\dots\\&\qquad=
\frac{(k-1)(k-2)\dots(k-\ell)}{\alpha(\alpha+1)\dots(\alpha+\ell-1)}
\int_\sigma^t (\tau-\sigma)^{k-\ell-1} (t-\tau)^{\alpha+\ell-1} \,d\tau
\\&\qquad=
\frac{(k-1)!}{\alpha(\alpha+1)\dots(\alpha+k-2)}
\int_\sigma^t (t-\tau)^{\alpha+k-2} \,d\tau\\&\qquad=
\frac{(k-1)!}{\alpha(\alpha+1)\dots(\alpha+k-1)}\,(t-\sigma)^{\alpha+k-1}.
\end{split}\end{equation}
In addition, since~$\Theta_{k,k-\alpha,g,t}\in L^1((a,t))$ for all~$t>a$
in view of~\eqref{CkAM1} and~\eqref{CkAM2}, we know that the map
$$  (a,t)\ni\sigma\,\mapsto\,g^{(k)}(\sigma)\,(t-\sigma)^{\alpha+k-1}$$
belongs to~$L^1((a,t))$. Hence, we can exploit~\eqref{OJLSNNDND} and Fubini's Theorem
to see that
\begin{eqnarray*}&&\frac{(k-1)!}{\alpha(\alpha+1)\dots(\alpha+k-1)}\,
\int_a^t g^{(k)}(\sigma)\,(t-\sigma)^{\alpha+k-1}\,d\sigma\\
&=&\int_a^t g^{(k)}(\sigma)\,
\left[
\int_\sigma^t (\tau-\sigma)^{k-1} (t-\tau)^{\alpha-1} \,d\tau\right]
\,d\sigma\\
&=&\int_a^t 
\left[
\int_a^\tau g^{(k)}(\sigma)\,(\tau-\sigma)^{k-1} \,d\sigma\right]\,(t-\tau)^{\alpha-1}
\,d\tau.
\end{eqnarray*}
Plugging this information into~\eqref{FUb}, and using also~\eqref{OJLSNNDND}
once again, we conclude that, for every~$t>a$,
\begin{eqnarray*}
v(t)&=&
\frac{1}{\alpha(\alpha+1)\dots(\alpha+k-1)}\,
\int_a^t g^{(k)}(\sigma)\,(t-\sigma)^{\alpha+k-1}\,d\sigma
\\&&\qquad +
\sum_{i=0}^{k-1} \frac{g^{(i)}(a)}{\alpha(\alpha+1)\dots(\alpha+i)}\, (t-a)^{\alpha+i}
.
\end{eqnarray*}
We can now take derivatives and find that, for each~$j\in\{0,\dots,k-1\}$,
\begin{equation}\label{AKSMSxS}\begin{split}
v^{(j)}(t)\,&=
\frac{1}{\alpha(\alpha+1)\dots(\alpha+k-1-j)}\,
\int_a^t g^{(k)}(\sigma)\,(t-\sigma)^{\alpha+k-1-j}\,d\sigma\\&\qquad
+
\sum_{i=0}^{k-1} 
\frac{(\alpha+i)\dots(\alpha+i-j+1)\,g^{(i)}(a)}{\alpha(\alpha+1)\dots(\alpha+i)}
\,(t-a)^{\alpha+i-j}.
\end{split}\end{equation}
In particular, when~$j:=k-1$,
\begin{eqnarray*}
v^{(k-1)}(t)&=&
\frac{1}{\alpha}\,
\int_a^t g^{(k)}(\sigma)\,(t-\sigma)^{\alpha}\,d\sigma
+
\sum_{i=0}^{k-1} 
\frac{(\alpha+i)\dots(\alpha+i-k+2)\,g^{(i)}(a)}{\alpha(\alpha+1)\dots(\alpha+i)}\, (t-a)^{\alpha+i-k+1}.
\end{eqnarray*}
Taking one more derivative, we obtain~\eqref{Cva}, as desired.
Also, from~\eqref{Cva} and~\eqref{AKSMSxS}, we see that~$v\in AC^{k-1}
([a,+\infty))$.
\end{proof}
\end{lemma}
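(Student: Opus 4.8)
The plan is to reduce the single integral defining $v$ to a finite collection of integrals that can be evaluated in closed form, and then to differentiate the resulting expression directly.

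First, since $g\in AC^{k-1}(\overline{(a,t)})$, I would expand $g$ by the integral form of Taylor's theorem about the base point $a$: applying the Fundamental Theorem of Calculus for absolutely continuous functions $k$ times writes $g(\tau)$ as a polynomial of degree $k-1$ in $(\tau-a)$ with coefficients $g^{(i)}(a)/i!$, plus an integral remainder $\frac{1}{(k-1)!}\int_a^\tau g^{(k)}(\sigma)(\tau-\sigma)^{k-1}\,d\sigma$. Substituting this into $v(t)=\int_a^t g(\tau)(t-\tau)^{\alpha-1}\,d\tau$ splits $v$ into a finite sum of ``monomial'' contributions $\frac{g^{(i)}(a)}{i!}\int_a^t(\tau-a)^i(t-\tau)^{\alpha-1}\,d\tau$ and one double-integral ``remainder'' contribution.

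Second, I would evaluate each piece explicitly. The monomial integrals are Euler Beta integrals and produce a constant multiple of $(t-a)^{\alpha+i}$ (equivalently, one obtains them by $i$ successive integrations by parts). For the remainder, I would exchange the order of integration by Fubini—legitimate precisely because the hypothesis $g\in C^{k,k-\alpha}_a$ forces $\Theta_{k,k-\alpha,g,t}(\sigma)=g^{(k)}(\sigma)(t-\sigma)^{\alpha-1}\in L^1((a,t))$ (see \eqref{CkAM1}--\eqref{CkAM2}). The resulting inner integral $\int_\sigma^t(\tau-\sigma)^{k-1}(t-\tau)^{\alpha-1}\,d\tau$ is then computed by integrating by parts $k-1$ times, each step transferring a derivative from the factor $(\tau-\sigma)^{k-1}$ onto an antiderivative of $(t-\tau)^{\alpha-1}$; the boundary terms vanish because the exponents stay positive, and the outcome is $\frac{(k-1)!}{\alpha(\alpha+1)\cdots(\alpha+k-1)}(t-\sigma)^{\alpha+k-1}$. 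Collecting everything yields the closed form
\[
v(t)=\frac{1}{\alpha(\alpha+1)\cdots(\alpha+k-1)}\int_a^t g^{(k)}(\sigma)(t-\sigma)^{\alpha+k-1}\,d\sigma+\sum_{i=0}^{k-1}\frac{g^{(i)}(a)}{\alpha(\alpha+1)\cdots(\alpha+i)}(t-a)^{\alpha+i}.
\]

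Third, I would differentiate this expression in $t$. For $0\le j\le k-1$ the integral kernel $(t-\sigma)^{\alpha+k-1}$ has exponent strictly larger than $j$, so differentiating $j$ times under the integral sign is justified (the differentiated kernel stays continuous and the boundary terms at $\tau=t$ vanish because the relevant exponents remain $\ge\alpha>0$), while the finite sum differentiates termwise; this already gives $v\in C^{k-1}([a,+\infty))$. The $k$-th derivative is the delicate step: at $j=k-1$ the kernel exponent has dropped to $\alpha\in(0,1)$, and one further differentiation produces a boundary contribution $g^{(k)}(t)(t-t)^{\alpha}$, which vanishes since $\alpha>0$, together with the factor $\alpha$ pulled down from the exponent, giving exactly the integral $\int_a^t g^{(k)}(\tau)(t-\tau)^{\alpha-1}\,d\tau$ of \eqref{Cva}; the sum contributes the stated coefficients. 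Since the resulting $v^{(k)}$ is then locally integrable (again by the $L^1$ hypothesis), $v^{(k-1)}$ is absolutely continuous, upgrading $v\in C^{k-1}$ to $v\in AC^{k-1}([a,+\infty))$. The main obstacle I anticipate is not the individual integral evaluations, which are routine, but the careful justification of the two interchange steps and especially the validity of differentiation under the integral sign all the way down to the singular kernel $(t-\sigma)^{\alpha-1}$ at the last derivative; both hinge on the single structural hypothesis $\Theta_{k,k-\alpha,g,t}\in L^1((a,t))$, so the real content is to track the exponents $\alpha+i-j$ through each differentiation and to confirm that every boundary term produced by the integrations by parts and by the Leibniz rule genuinely vanishes, which it does exactly because $\alpha>0$.
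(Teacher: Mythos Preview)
Your proposal is correct and follows essentially the same route as the paper: Taylor-expand $g$ about $a$ with integral remainder, plug into $v$, evaluate the resulting Beta-type integrals by repeated integration by parts, apply Fubini (justified via the $L^1$ hypothesis on $\Theta_{k,k-\alpha,g,t}$) to arrive at the same closed form for $v(t)$, and then differentiate $k$ times. Your discussion of the regularity upgrade from $C^{k-1}$ to $AC^{k-1}$ via local integrability of $v^{(k)}$ is slightly more explicit than the paper's, but the argument is the same.
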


Now we state a representation result for linear fractional equations
of any order.

\begin{lemma}\label{VOLT}
Let $g\in C_b^{k,k-\alpha}$. The problem
\begin{equation}
\label{amam}
\begin{cases}
D_b^\alpha u(t)=g(t)&\quad{\mbox{ in }}\quad(b,+\infty) ,\\
u^{(h)}(b)=0&\quad{\mbox{ for any }}\quad h=0,\ldots,k-1,
\end{cases}
\end{equation}
admits a unique solution $u\in C_b^{k,\alpha}$. Moreover, for any $t>b$,
\begin{equation}\label{a.asj}
u(t)=\frac{1}{\Gamma(\alpha)}
\int_b^t g(\tau)(t-\tau)^{\alpha-1} d\tau.
\end{equation}
\begin{proof}
Let us start by proving the uniqueness claim.
For this, let $u_1$ and~$u_2\in C_b^{k,\alpha}$ be two different solutions of \eqref{amam},
and let $u:=u_1-u_2$. Then $u\in C_b^{k,\alpha}$
and
\begin{equation}\label{AJmA9kA}
\begin{cases}
D_b^\alpha u(t)=0&\quad\text{in}\quad(b,+\infty) ,\\
u^{(h)}(b)=0&\quad\text{for any}\quad h=0,\ldots,k-1.
\end{cases}
\end{equation}
Hence, recalling~\eqref{defcap}, for every~$s>t$ we have that
\begin{equation}
\label{nain}0={\Gamma(k-\alpha)}\,
\int_b^s D_b^{\alpha}u(t)\,(s-t)^{\alpha-k}\,dt=\int_b^s\left[
\int_b^t \frac{ u^{(k)}(\tau) }{ (t-\tau)^{\alpha-k+1}} d\tau \right](s-t)^{\alpha-k}
\,dt.\end{equation}
We recall the Euler's Beta-Function, defined, for~$x$ and~$y>0$, as
$$ {\mathrm{B}} (x,y):=\int _{0}^{1} \vartheta^{x-1}(1-\vartheta)^{y-1}\,d\vartheta,$$
and the fact that
\begin{equation}\label{bega}{\mathrm{B}} (x,y)={\mathrm{B}} (y,x)
=\frac{\Gamma (x)\,\Gamma (y)}{\Gamma (x+y)},\end{equation}
see e.g.~\cite{MR0167642}.

We employ the change of variable~$\vartheta:=\frac{t-\tau}{s-\tau}$
and we observe that, for all~$x$, $y>0$,
\begin{equation}\label{BETA} \int_\tau^s (t-\tau)^{x-1}(s-t)^{y-1}\,dt=
(s-\tau)^{x+y-1}\,{\mathrm{B}} (x,y),\end{equation}
and therefore
$$ \int_\tau^s (t-\tau)^{k-\alpha-1}(s-t)^{\alpha-k}dt=
{\mathrm{B}} (k-\alpha,\alpha-k+1)=
\Gamma(k-\alpha)\Gamma(\alpha-k+1).$$

Then, 
\begin{eqnarray*}&&
\Gamma(k-\alpha)\Gamma(\alpha-k+1)\,\|u^{(k)}\|_{L^1((b,s))}=
\Gamma(k-\alpha)\Gamma(\alpha-k+1)\,\int_b^s|u^{(k)}(\tau)|\,d\tau\\&&\qquad=
\int_b^s\left[\int_\tau^s (t-\tau)^{k-\alpha-1}(s-t)^{\alpha-k}dt\right]\,|u^{(k)}(\tau)|\,d\tau.
\end{eqnarray*}
As a result, by Fubini-Tonelli's Theorem,
\begin{eqnarray*}&&
\Gamma(k-\alpha)\Gamma(\alpha-k+1)\,\Big( u^{(k-1)}(s)-u^{(k-1)}(b)\Big)=
\Gamma(k-\alpha)\Gamma(\alpha-k+1)\,\int_b^su^{(k)}(\tau)\,d\tau\\&&\qquad=
\int_b^s\left[\int_\tau^s (t-\tau)^{k-\alpha-1}(s-t)^{\alpha-k}dt\right]\,u^{(k)}(\tau)\,d\tau
\\&&\qquad=\int_b^s\left[\int_b^t (t-\tau)^{k-\alpha-1}
\,u^{(k)}(\tau)d\tau\right](s-t)^{\alpha-k}
\,dt.
\end{eqnarray*}
The latter term vanishes, in the light of~\eqref{nain}, and therefore we conclude that
$$ u^{(k-1)}(s)-u^{(k-1)}(b)=0.$$
Recalling the initial condition in~\eqref{AJmA9kA}, we thereby obtain that $u^{(k-1)}(s)=0$.
Since this is valid for all~$s>b$, we have that~$u^{(k-1)}$ vanishes identically in~$[b,+\infty)$.

This in turn implies that~$u^{(k-2)}$ is constant in~$[b,+\infty)$. Recalling
the initial condition in~\eqref{AJmA9kA}, we thus deduce that~$u^{(k-2)}$ vanishes identically in~$[b,+\infty)$.

Iterating this argument, we find that~$u$ vanishes identically in~$[b,+\infty)$,
and therefore~$u_1$ coincides with~$u_2$ and the uniqueness claim
in Lemma~\ref{VOLT} is established.

To complete the proof of Lemma~\ref{VOLT},
it remains to check that if~$u$ is defined as in~\eqref{a.asj},
then~$u\in C_b^{k,\alpha}$ and it satisfies~\eqref{amam}.
To check these facts, we first recall Lemma~\ref{ANCD}, according to which
$u\in AC^{k-1}([b,+\infty))$, and, for any~$t>b$,
\begin{eqnarray*}u^{(k)}(t)&=&\frac1{\Gamma(\alpha)}\Bigg[
\int_b^t g^{(k)}(\tau)(t-\tau)^{\alpha-1}d\tau\\&&\qquad +
\sum_{i=0}^{k-1} 
\frac{(\alpha+i)\dots(\alpha+i-k+1)\,g^{(i)}(b)}{\alpha(\alpha+1)\dots(\alpha+i)}
\;(t-b)^{\alpha+i-k}\Bigg]
.\end{eqnarray*}
Therefore, in the notation of~\eqref{CkAM1},
\begin{equation}\label{HANASKS2}\begin{split}\Theta_{k,\alpha,u,t}(\sigma)\,&=
u^{(k)}(\sigma)(t-\sigma)^{k-\alpha-1}\\&=
\frac1{\Gamma(\alpha)}\Bigg[
\int_b^\sigma g^{(k)}(\tau)(\sigma-\tau)^{\alpha-1}(t-\sigma)^{k-\alpha-1}\,d\tau\\&\qquad+
\sum_{i=0}^{k-1}
\frac{(\alpha+i)\dots(\alpha+i-k+1)\,g^{(i)}(b)}{\alpha(\alpha+1)\dots(\alpha+i)}
\;(\sigma-b)^{\alpha+i-k}(t-\sigma)^{k-\alpha-1}\Bigg]
.\end{split}\end{equation}
We observe that
\begin{equation}\label{HANASKS}\begin{split}&
\int_b^t\left|\sum_{i=0}^{k-1}\frac{(\alpha+i)\dots(\alpha+i-k+1)\,g^{(i)}(b)}{
\alpha(\alpha+1)\dots(\alpha+i)}
\;(\sigma-b)^{\alpha+i-k}(t-\sigma)^{k-\alpha-1}\right|\,d\sigma\\
\le\;& \sum_{i=0}^{k-1} \left|\frac{(\alpha+i)\dots(\alpha+i-k+1)\,g^{(i)}(b)}{
\alpha(\alpha+1)\dots(\alpha+i)}\right|\,\int_b^t
(\sigma-b)^{\alpha+i-k}(t-\sigma)^{k-\alpha-1}\,d\sigma\\=\;&
\sum_{i=0}^{k-1} \left|\frac{(\alpha+i)\dots(\alpha+i-k+1)\,g^{(i)}(b)}{
\alpha(\alpha+1)\dots(\alpha+i)}\right|\,{\mathrm{B}}(\alpha+i-k+1,k-\alpha)\,(t-b)^i,
\end{split}\end{equation}
where~\eqref{BETA} has been used in the last line (with~$x:=\alpha+i-k+1$ and~$y:=k-\alpha$).

On the other hand, making again use of~\eqref{BETA} with~$x:=\alpha$ and~$y:=k-\alpha$ here,
we see that
\begin{eqnarray*}
&&\int_b^t\left|
\int_b^\sigma g^{(k)}(\tau)(\sigma-\tau)^{\alpha-1}(t-\sigma)^{k-\alpha-1}\,d\tau\right|\,d\sigma\\
&\le&\int_b^t\left[
\int_b^\sigma \big|g^{(k)}(\tau)\big|\,
(\sigma-\tau)^{\alpha-1}(t-\sigma)^{k-\alpha-1}\,d\tau\right]\,d\sigma\\
&\le&\int_b^t\big|g^{(k)}(\tau)\big|\,\left[
\int_\tau^t 
(\sigma-\tau)^{\alpha-1}(t-\sigma)^{k-\alpha-1}\,d\sigma\right]\,d\tau\\
&=&{\mathrm{B}}(\alpha,k-\alpha)\,\int_b^t 
\big|g^{(k)}(\tau)\big|\,(t-\tau)^{k-1}\,d\tau,
\end{eqnarray*}
which is finite, thanks to our assumptions on~$g$.

Plugging this estimate and~\eqref{HANASKS} into~\eqref{HANASKS2}, we thereby deduce that,
for all~$t>b$,
$$ \int_b^t |\Theta_{k,\alpha,u,t}(\sigma)|\,d\sigma<+\infty,$$
and therefore~$u\in C_b^{k,\alpha}$.

With this, it only remains to check~\eqref{amam}. To this end,
we observe that the initial point conditions are satisfied, due to~\eqref{AKSMSxS}.
Moreover, \eqref{Cva} gives that
\begin{eqnarray*} \Gamma(\alpha)\,u^{(k)}(t)&=&
\int_b^t g^{(k)}(\sigma)\,(t-\sigma)^{\alpha-1}\,d\sigma\\&&\qquad
+
\sum_{i=0}^{k-1} 
\frac{(\alpha+i)\dots(\alpha+i-k+1)\,g^{(i)}(b)}{\alpha(\alpha+1)\dots(\alpha+i)}
\,(t-b)^{\alpha+i-k},
\end{eqnarray*}
and therefore, in view of~\eqref{defcap} and~\eqref{BETA},
\begin{eqnarray*}&&
\Gamma(\alpha)\Gamma(k-\alpha)\,D_b^{\alpha}u(t)\\&=&
\Gamma(\alpha)\,\int_b^t 
\frac{u^{(k)}\left(\tau\right)}{(t-\tau)^{\alpha-k+1}} d\tau
\\&=& \int_b^t\left(
\int_b^\tau g^{(k)}(\sigma)\,(\tau-\sigma)^{\alpha-1}\,d\sigma
\right)(t-\tau)^{k-\alpha-1}\,d\tau\\&&\qquad
+
\sum_{i=0}^{k-1} \int_b^t
\frac{(\alpha+i)\dots(\alpha+i-k+1)\,g^{(i)}(b)}{\alpha(\alpha+1)\dots(\alpha+i)}\,
(\tau-b)^{\alpha+i-k}(t-\tau)^{k-\alpha-1}\,d\tau
\\&=& \int_b^tg^{(k)}(\sigma)\,\left(
\int_\sigma^t (\tau-\sigma)^{\alpha-1}(t-\tau)^{k-\alpha-1}\,d\tau
\right)\,d\sigma\\&&\qquad
+
\sum_{i=0}^{k-1} {\mathrm{B}}(\alpha+i-k+1,k-\alpha)
\frac{(\alpha+i)\dots(\alpha+i-k+1)\,g^{(i)}(b)}{\alpha(\alpha+1)\dots(\alpha+i)}\,
(t-b)^i\\&=&{\mathrm{B}}(\alpha,k-\alpha)\,
\int_b^tg^{(k)}(\sigma)\,(t-\sigma)^{k-1}d\sigma\\&&\qquad
+
\sum_{i=0}^{k-1} {\mathrm{B}}(\alpha+i-k+1,k-\alpha)
\frac{(\alpha+i)\dots(\alpha+i-k+1)\,g^{(i)}(b)}{\alpha(\alpha+1)\dots(\alpha+i)}\,
(t-b)^i.
\end{eqnarray*}
Hence, recalling~\eqref{bega} and using the fact that~$\Gamma(z+1)=z\Gamma(z)$, we have that
\begin{eqnarray*}
&&\Gamma(\alpha)\Gamma(k-\alpha)\,D_b^{\alpha}u(t)\\&=&
\frac{\Gamma(\alpha)\Gamma(k-\alpha)}{(k-1)!}\,
\int_b^tg^{(k)}(\sigma)\,(t-\sigma)^{k-1}d\sigma\\&&\qquad
+
\sum_{i=0}^{k-1} 
\frac{\Gamma(\alpha+i-k+1)\Gamma(k-\alpha)}{i!}\cdot
\frac{(\alpha+i)\dots(\alpha+i-k+1)\,g^{(i)}(b)}{\alpha(\alpha+1)\dots(\alpha+i)}\,
(t-b)^i
\\&=&
\frac{\Gamma(\alpha)\Gamma(k-\alpha)}{(k-1)!}\,
\int_b^tg^{(k)}(\sigma)\,(t-\sigma)^{k-1}d\sigma
+\Gamma(\alpha)\Gamma(k-\alpha)\,
\sum_{i=0}^{k-1}\frac{g^{(i)}(b)}{i!}\,(t-b)^i
.
\end{eqnarray*}
As a consequence, recalling~\eqref{76},
$$ \Gamma(\alpha)\Gamma(k-\alpha)\,D_b^{\alpha}u(t)=
\Gamma(\alpha)\Gamma(k-\alpha)\,g(t),$$
which gives that~$D_b^{\alpha}u(t)=g(t)$, as desired.
\end{proof}
\end{lemma}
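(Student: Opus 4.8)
The plan is to handle uniqueness and existence separately, exploiting throughout the observation that the candidate solution \eqref{a.asj} is, up to the factor $1/\Gamma(\alpha)$, the Riemann--Liouville fractional integral of $g$, so that the whole statement is really a quantitative form of the inversion between Caputo differentiation and fractional integration.

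For uniqueness, I would reduce to showing that the only $u\in C_b^{k,\alpha}$ solving the homogeneous problem with vanishing initial jet $u^{(h)}(b)=0$, $h=0,\dots,k-1$, is $u\equiv 0$. The guiding idea is to ``undo'' the fractional derivative by integrating the identity $D_b^\alpha u(t)=0$ against the kernel $(s-t)^{\alpha-k}$ over $t\in(b,s)$. After applying Fubini--Tonelli (legitimate since $\Theta_{k,\alpha,u,t}\in L^1$) to exchange the order in the resulting double integral, the inner integral $\int_\tau^s (t-\tau)^{k-\alpha-1}(s-t)^{\alpha-k}\,dt$ is exactly a Beta integral of the shape \eqref{BETA} with $x=k-\alpha$ and $y=\alpha-k+1$; since $x+y-1=0$, it equals the constant $\mathrm{B}(k-\alpha,\alpha-k+1)$. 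This collapses the double integral to a multiple of $\int_b^s u^{(k)}(\tau)\,d\tau=u^{(k-1)}(s)-u^{(k-1)}(b)$, forcing $u^{(k-1)}(s)=0$ for all $s>b$ once the initial condition is used. A descending induction on the order of differentiation (each step integrating a vanishing derivative and invoking the corresponding initial condition) then yields $u\equiv 0$.

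For existence together with the representation formula, I would take $u$ as in \eqref{a.asj} and verify the three requirements in turn. First, Lemma~\ref{ANCD} applied with initial point $b$ gives at once $u\in AC^{k-1}([b,+\infty))$ together with the explicit expression \eqref{Cva} for $u^{(k)}$; evaluating the lower-order derivatives through \eqref{AKSMSxS} at $t=b$, every surviving term carries a power $(t-b)^{\alpha+i-j}$ with positive exponent (positive because $\alpha>k-1\ge j$), so the initial conditions $u^{(h)}(b)=0$ hold. Second, to certify $u\in C_b^{k,\alpha}$ I would insert the formula for $u^{(k)}$ into $\Theta_{k,\alpha,u,t}$ and bound its $L^1((b,t))$ norm by splitting into the integral contribution and the finite sum, each piece reducing, after one more use of \eqref{BETA}, to a convergent Beta integral. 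Third, I would compute $D_b^\alpha u$ directly from \eqref{defcap} using the expression for $u^{(k)}$: the iterated integral is rearranged by Fubini and evaluated by \eqref{BETA}, while the polynomial terms generate Beta factors; simplifying the constants via \eqref{bega} and the functional equation $\Gamma(z+1)=z\Gamma(z)$, the result coincides with the Taylor-type expansion \eqref{76} of $g$ about $b$, whence $D_b^\alpha u=g$.

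I expect the main obstacle to be the verification that $u\in C_b^{k,\alpha}$, that is, the integrability of $\Theta_{k,\alpha,u,t}$: one must simultaneously control the singularity of $g^{(k)}$ afforded by the hypothesis $g\in C_b^{k,k-\alpha}$ and the boundary singularities $(\sigma-b)^{\alpha+i-k}$ produced by the lower-order terms of \eqref{Cva}, and it is precisely the Beta-function bookkeeping that keeps every exponent inside the convergent range where the computation is most easily derailed.
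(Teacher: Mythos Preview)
Your proposal is correct and follows essentially the same approach as the paper's proof: the uniqueness argument via integrating $D_b^\alpha u(t)=0$ against $(s-t)^{\alpha-k}$, collapsing the double integral through the Beta identity to recover $u^{(k-1)}(s)-u^{(k-1)}(b)=0$, and then descending through the initial conditions, is precisely what the paper does; likewise, the existence part matches the paper's use of Lemma~\ref{ANCD} for the derivative formula and initial conditions, the Beta-integral bounds to place $u$ in $C_b^{k,\alpha}$, and the Fubini/Beta/Gamma bookkeeping to recover the Taylor expansion~\eqref{76} of $g$. The only cosmetic difference is that the paper justifies Fubini in the uniqueness step by first showing the double integral of $|u^{(k)}(\tau)|(t-\tau)^{k-\alpha-1}(s-t)^{\alpha-k}$ equals a constant times $\|u^{(k)}\|_{L^1((b,s))}$ (finite since $u^{(k-1)}\in AC$), whereas you appeal to $\Theta_{k,\alpha,u,t}\in L^1$; the former is the sharper hypothesis for the 2D absolute integrability needed, but your outline is otherwise on the mark.
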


A bootstrap regularity theory for time-fractional equations
leads to additional smoothness of the solution. In our framework, the result needed is the following:

\begin{lemma}
\label{morereg}
Let $g\in C^h([b,+\infty))$ for every~$h\in\mathbb{N}_0$,
and $u\in C^{k,\alpha}_b$ be a solution of
\begin{equation*}
\begin{cases}
D_b^\alpha u(t)=g(t)&\quad\text{in}\quad(b,+\infty), \\
u^{(h)}(b)=0&\quad{\mbox{ for any }}\quad h=0,\ldots,k-1,
\end{cases}
\end{equation*}
Then $u\in C^\infty((b,+\infty))$.
\begin{proof}
In light of~\eqref{a.asj}, we can write, for every~$t>b$,
$$ u(t)=\frac{1}{\Gamma(\alpha)}
\int_0^{t-b} g(t-\sigma)\sigma^{\alpha-1} d\sigma.
$$
The desired result follows by taking derivatives in~$t$.
\end{proof}
\end{lemma}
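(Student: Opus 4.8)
The plan is to exploit the explicit representation formula from Lemma~\ref{VOLT} and then to move the singularity of the integral kernel onto a fixed endpoint, where it becomes harmless. First I would check that the hypothesis that $g\in C^h([b,+\infty))$ for every $h\in\mathbb{N}_0$ guarantees $g\in C_b^{k,k-\alpha}$: indeed $g$ is then of class $AC^{k-1}$ on every compact subinterval, and in the notation of~\eqref{CkAM1} we have $\Theta_{k,k-\alpha,g,t}(\tau)=g^{(k)}(\tau)(t-\tau)^{\alpha-1}$, which is integrable on $(b,t)$ because $g^{(k)}$ is bounded on compacts and $\alpha-1>-1$ (recall that $\alpha\in(k-1,k)$ with $k\ge 1$). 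Hence Lemma~\ref{VOLT} applies, the problem has a unique solution in $C_b^{k,\alpha}$, and the given $u$ must therefore coincide with the one produced by the representation~\eqref{a.asj}.

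Next I would perform the change of variable $\sigma:=t-\tau$ in~\eqref{a.asj}, which recasts the representation as
\[ u(t)=\frac{1}{\Gamma(\alpha)}\int_0^{t-b} g(t-\sigma)\,\sigma^{\alpha-1}\,d\sigma. \]
The crucial gain is that the only singular factor, namely $\sigma^{\alpha-1}$, is now anchored at the fixed lower endpoint $\sigma=0$, where the smooth factor $g(t-\sigma)$ and all its derivatives stay bounded, while the moving upper endpoint $\sigma=t-b$ sits at a strictly positive distance from the singularity for every $t>b$.

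I would then differentiate repeatedly in $t$ by means of the Leibniz integral rule for a variable upper limit. A first differentiation gives
\[ \Gamma(\alpha)\,u'(t)=g(b)\,(t-b)^{\alpha-1}+\int_0^{t-b} g'(t-\sigma)\,\sigma^{\alpha-1}\,d\sigma, \]
where the boundary contribution $g(b)(t-b)^{\alpha-1}$ is smooth on $(b,+\infty)$ and the remaining integral has exactly the same structure as $u$, with $g$ replaced by $g'$. Iterating this and arguing by induction on the order of differentiation, I would show that $u^{(n)}(t)$ is a finite sum of boundary terms, each a product of some $g^{(j)}(b)$ with a power of $(t-b)$ and hence smooth for $t>b$, plus one integral term of the form $\frac{1}{\Gamma(\alpha)}\int_0^{t-b} g^{(n)}(t-\sigma)\,\sigma^{\alpha-1}\,d\sigma$, which is continuous in $t$ since $g^{(n)}$ is bounded on compacts and $\sigma^{\alpha-1}\in L^1$ near $0$. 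This yields $u\in C^\infty((b,+\infty))$.

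The main obstacle is the rigorous justification of differentiating under the integral sign in the presence of the singular kernel $\sigma^{\alpha-1}$ together with the moving endpoint. The point that resolves it is precisely the one highlighted above: after the change of variable the singularity is confined to the fixed endpoint $\sigma=0$, so the Leibniz boundary term is always evaluated at $\sigma=t-b>0$, away from the singularity, while the dominated convergence theorem applies to the interior integral because each $g^{(n)}(t-\sigma)$ is uniformly bounded for $t$ ranging in a compact neighborhood of any $t_0>b$ and $\sigma^{\alpha-1}$ provides an integrable majorant on $(0,t_0-b)$.
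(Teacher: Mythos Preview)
Your proposal is correct and follows exactly the same approach as the paper: invoke the representation formula~\eqref{a.asj} from Lemma~\ref{VOLT}, shift the singularity to the fixed endpoint via the substitution $\sigma=t-\tau$, and differentiate in~$t$. The paper's proof is extremely terse (it just writes the transformed integral and says ``the desired result follows by taking derivatives in~$t$''), whereas you spell out the verification that $g\in C_b^{k,k-\alpha}$, the explicit Leibniz computation, the inductive structure of the higher derivatives, and the dominated convergence justification---all details the paper leaves implicit.
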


\subsection{Existence of a sequence of Caputo-stationary functions that tends to the function $t^\alpha$}

Now, we generalize some
results contained in Section 3 of \cite{MR3716924} concerning the boundary
asymptotics of solutions of fractional equations, and we
construct a sequence of Caputo-stationary functions which tends 
to the function $t^\alpha$
uniformly
on bounded subintervals of~$(0,+\infty)$. Differently from the previous literature, we deal
with fractional derivatives of any order.

More precisely, the result that we need is the following:

\begin{lemma}
\label{cat}
Let~$b\in(-\infty,0]\cup\{-\infty\}$.
Let $\psi_0\in C^{k,\alpha}_{-\infty}$ be such that
\begin{equation}\label{EQNSMS}
\psi_0^{(k)}=0\quad\text{for any}\quad t\in\left(-\infty, 0\right),\qquad{\mbox{and}}\qquad 
\psi_0(t)=0\quad\text{for any}\quad t\in
\left[\frac{3}{4},1\right] .\end{equation}
Then, there exists $\psi\in C_b^{k,\alpha}$ such that
\begin{equation}
\label{vpv}
\begin{cases}
D_b^\alpha\psi(t)=0\quad&\text{in}\quad (1,+\infty) ,\\
\psi(t)=\psi_0(t)\quad&\text{in}\quad (-\infty,1].
\end{cases}
\end{equation}
Moreover, 
$\psi\in C^\infty((1,+\infty))$, and we have that
\begin{equation}
\label{asycap}
\psi(1+\epsilon)=\kappa\epsilon^\alpha+o(\epsilon^{\alpha}),
\end{equation}
as $\epsilon\to 0^+$, 
with
\begin{equation}
\label{asycap2}
\kappa:=-\frac{1}{\Gamma(\alpha)\Gamma(k-\alpha)}\,
\int_0^1\left(\int_0^{3/4}\psi_0^{(k)}(\omega)(1-\omega)^{k-\alpha-1}
d\omega\right)(1-z)^{\alpha-1}dz\in\mathbb{R}.
\end{equation}

\begin{proof}
For every $t\in[1,+\infty)$, we set 
\begin{equation}\label{DBEgAB}
g(t):=-\frac{1}{\Gamma(k-\alpha)}\,\int_0^{3/4}\psi_0^{(k)}(\tau)(t-\tau)^{k-\alpha-1}d\tau.\end{equation}
By construction, the term~$t-\tau$ in the integrand above never vanishes,
thus permitting to take derivatives inside the integral sign. Consequently,
we have that~$g\in C^h([1,+\infty))$ for every~$h\in\mathbb{N}_0$.

For any~$t>1$, we define
\begin{equation}\label{WE SBKD}
\psi(t):=\frac{1}{\Gamma(\alpha)}
\int_1^t g(\tau)(t-\tau)^{\alpha-1} d\tau.
\end{equation}
We know from Lemma~\ref{VOLT} that
\begin{equation}\label{ALsloldfffaaaaDF}
\psi\in C^{k,\alpha}_1\end{equation} is a solution of
\begin{equation}\label{7uA923cv IK} \begin{cases}
D_1^\alpha \psi(t)=g(t)&\quad\text{in}\quad(1,+\infty) ,\\
\psi^{(h)}(1)=0&\quad{\mbox{for any}}\quad h=0,\ldots,k-1,
\end{cases}\end{equation}
and also~$\psi\in C^\infty((1,+\infty))$,
due to Lemma~\ref{morereg}.

We also extend~$\psi$ in~$(-\infty,1]$ by setting~$\psi(t):=\psi_0(t)$
for all~$t\in(-\infty,1]$. Since $\psi_0$ vanishes in~$\left[\frac34,1\right]$,
using the initial condition in~\eqref{7uA923cv IK} we have that~$ \psi^{(j)}_0(1)=0=\psi^{(j)}(1)$
for each~$j\in\{1,\dots,k-1\}$. Thus,
recalling~\eqref{ALsloldfffaaaaDF}
and using Lemma~\ref{LA2lem} (exploited here with~$f:=\psi_0$
and~$g:=\psi$), we find that
\begin{equation}\label{Bam}
\psi\in C^{k,\alpha}_b.
\end{equation}
Then, in light of~\eqref{7uA923cv IK}, we can write that
\begin{equation}\label{7uAH789} \begin{cases}
D_1^\alpha \psi(t)=g(t)&\quad\text{in}\quad(1,+\infty) ,\\
\psi=\psi_0 &\quad\text{in}\quad(-\infty,1].
\end{cases}\end{equation}
{F}rom~\eqref{Bam}, it follows in particular that~$\psi\in C^{k,\alpha}_0$.
Consequently, by~\eqref{7uAH789} and Lemma \ref{equiv},
we have that
\begin{equation}\label{SPL}
{\mbox{$\psi$ is a solution of~\eqref{vpv} with~$b=0$.}}\end{equation}
Since~$\psi(t)=\psi_0(t)=\psi_0(0)$ if~$t\in (-\infty,0]$, we see that~$\psi^{(k)}(\tau)=0$
in~$(-\infty,0)$ and therefore, for every~$t\in(1,+\infty)$,
$$ D^\alpha_b\psi(t)=
\frac{1}{\Gamma(k-\alpha)}
\int_b^t \frac{\psi^{(k)}\left(\tau\right)}{(t-\tau)^{\alpha-k+1}} d\tau=
\frac{1}{\Gamma(k-\alpha)}
\int_0^t \frac{\psi^{(k)}\left(\tau\right)}{(t-\tau)^{\alpha-k+1}} d\tau=
D^\alpha_0\psi(t)=0,$$
thanks to~\eqref{SPL}
and this gives~\eqref{vpv} (alternatively, one can use
Lemma~\ref{uNA.AOKa} here).

Hence, to complete the proof of Lemma~\ref{cat}, it only remains to establish~\eqref{asycap}.
For this, let $\epsilon>0$ and $t:=1+\epsilon$. Then, by \eqref{WE SBKD},
$$
\Gamma(\alpha)\,\psi(1+\epsilon)=\int_1^{1+\epsilon} g(\tau)(1+\epsilon-\tau)^{\alpha-1}d\tau=\epsilon^\alpha\int_0^1 g(\epsilon z+1)(1-z)^{\alpha-1} dz,
$$
where the change of variables $\tau=\epsilon z+1$ has been used.

Furthermore, by~\eqref{DBEgAB},
$$
g(\epsilon z+1)=-\frac{1}{\Gamma(k-\alpha)}\,
\int_0^{3/4}\psi_0^{(k)}(\omega)(\epsilon z+1-\omega)^{k-\alpha-1}d\omega.
$$
Hence
$$
\Gamma(\alpha)\,\psi(1+\epsilon)=-\frac{\epsilon^\alpha}{\Gamma(k-\alpha)}\,
\int_0^1\left(\int_0^{3/4}\psi_0^{(k)}(\omega)(\epsilon z+1-\omega)^{k-\alpha-1}d\omega\right)(1-z)^{\alpha-1}dz.
$$
This gives that
$$ \lim_{\epsilon\to0^+}\epsilon^{-\alpha}
\Gamma(\alpha)\Gamma(k-\alpha)\psi(1+\epsilon)=-
\int_0^1\left(\int_0^{3/4}\psi_0^{(k)}(\omega)(1-\omega)^{k-\alpha-1}d\omega\right)(1-z)^{\alpha-1}dz.$$
This, together with~\eqref{asycap2}, establishes~\eqref{asycap},
as desired.
\end{proof}
\end{lemma}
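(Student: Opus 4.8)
The plan is to trade the homogeneous equation with lower endpoint $b$ for a non-homogeneous equation with the new initial point $t=1$, in the spirit of Lemma~\ref{equiv}, so that the whole effect of $\psi_0$ on the interval where it is not yet frozen is absorbed into an explicit, smooth source. Concretely, I would define $g$ as in~\eqref{DBEgAB}. Since $\psi_0^{(k)}$ vanishes on $(-\infty,0)$ and $\psi_0\equiv 0$ (hence $\psi_0^{(k)}\equiv 0$) on $[3/4,1]$, the memory integral collapses to the compact interval $[0,3/4]$; and for $t>1$ the quantity $t-\tau$ with $\tau\in[0,3/4]$ stays bounded below by $1/4$. Differentiating repeatedly under the integral sign is therefore legitimate and yields $g\in C^h([1,+\infty))$ for every $h\in\mathbb{N}_0$.

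With $g$ in hand, I would apply the representation result of Lemma~\ref{VOLT} to obtain the unique $\psi\in C^{k,\alpha}_1$ solving $D_1^\alpha\psi=g$ on $(1,+\infty)$ with vanishing Cauchy data $\psi^{(h)}(1)=0$ for $h=0,\dots,k-1$, given explicitly by the formula~\eqref{WE SBKD}. Rewriting this as the convolution $\psi(t)=\frac{1}{\Gamma(\alpha)}\int_0^{t-1}g(t-\sigma)\sigma^{\alpha-1}\,d\sigma$ and differentiating in $t$ (as in Lemma~\ref{morereg}) delivers the claimed smoothness $\psi\in C^\infty((1,+\infty))$.

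The third step is the gluing. I would extend $\psi$ by $\psi_0$ on $(-\infty,1]$. Because $\psi_0$ is identically zero on $[3/4,1]$, all of its derivatives vanish at $t=1$, and these exactly match the zero Cauchy data of the solution just constructed, so the two pieces join in a $C^{k-1}$ manner at $t=1$. The delicate point---and the step I expect to be the main obstacle---is to verify that the glued function still lies in $C^{k,\alpha}_b$, i.e. that $\Theta_{k,\alpha,\psi,t}\in L^1((b,t))$ for every $t>b$: near $\tau=1^+$ the boundary behavior forces $\psi^{(k)}(\tau)$ to blow up like $(\tau-1)^{\alpha-k}$ with exponent $\alpha-k\in(-1,0)$, which is integrable against the locally bounded weight $(t-\tau)^{k-\alpha-1}$, but this matching-and-integrability argument is exactly what one should isolate as a gluing lemma. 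Once membership in $C^{k,\alpha}_b$ is secured, Lemma~\ref{equiv} (used with initial points $0$ and $1$) turns the equation $D_1^\alpha\psi=g$ back into $D_0^\alpha\psi=0$ on $(1,+\infty)$; and since $\psi^{(k)}=\psi_0^{(k)}=0$ on $(-\infty,0)$, the portion of the Caputo integral over $(b,0)$ contributes nothing, so $D_b^\alpha\psi=D_0^\alpha\psi=0$ for every admissible $b$. This establishes~\eqref{vpv}.

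It remains to read off the boundary asymptotics~\eqref{asycap}. I would evaluate~\eqref{WE SBKD} at $t=1+\epsilon$ and substitute $\tau=1+\epsilon z$, which factors out the scale and gives $\Gamma(\alpha)\,\psi(1+\epsilon)=\epsilon^\alpha\int_0^1 g(1+\epsilon z)(1-z)^{\alpha-1}\,dz$. Inserting the expression~\eqref{DBEgAB} for $g$ and letting $\epsilon\to 0^+$ by dominated convergence (the integrand is dominated by a constant times the integrable weight $(1-z)^{\alpha-1}$, as $g$ is continuous) produces precisely the double integral defining $\kappa$ in~\eqref{asycap2}. Hence $\epsilon^{-\alpha}\psi(1+\epsilon)\to\kappa$, which is the desired expansion $\psi(1+\epsilon)=\kappa\epsilon^\alpha+o(\epsilon^\alpha)$, and completes the proof.
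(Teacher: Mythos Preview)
Your proposal is correct and follows essentially the same route as the paper: define the smooth source $g$ by the compactly supported memory integral, solve $D_1^\alpha\psi=g$ via Lemma~\ref{VOLT} and upgrade regularity via Lemma~\ref{morereg}, glue with $\psi_0$ at $t=1$, then undo the shift with Lemma~\ref{equiv} and extend to any $b\le 0$ using $\psi^{(k)}\equiv 0$ on $(-\infty,0)$, finishing with the scaling substitution $\tau=1+\epsilon z$ for the asymptotics. The ``gluing lemma'' you anticipate is precisely Lemma~\ref{LA2lem} in the paper, invoked exactly where you flag the delicate membership $\psi\in C^{k,\alpha}_b$.
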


In our setting, it is crucial that we can choose~$\psi_0$ such that~$\kappa$
in~\eqref{asycap2} is not zero. This is warranted by the following observation:

\begin{lemma} \label{SEGNP}
There exists~$\psi_0$ satisfying all the assumptions of Lemma~\ref{cat}
and such that~$\kappa>0$,
where the setting in~\eqref{asycap2}
has been used.

\begin{proof} We let
$$\psi_0(t):=\begin{cases}
(-1)^{k-1}
\displaystyle\sum_{j=0}^{k-1}\left({k}\atop{j}\right)
\left(\displaystyle\frac34\right)^{k-j}t^j & {\mbox{ if }}t\le 0,\\
(-1)^{k-1}\left(\displaystyle\frac34-t\right)^k &{\mbox{ if }}t\in\left(0,\displaystyle\frac34\right),\\
0 & {\mbox{ if }}t\ge\displaystyle\frac34.             
\end{cases}$$
We observe that the statements in~\eqref{EQNSMS} are satisfied in this case.
Furthermore, we claim that
\begin{equation}\label{8JANNasadf}
\psi_0\in C^{k,\alpha}_{-\infty}.\end{equation}
Indeed, using Lemma~\ref{LA2lem} with
$$f(t):=(-1)^{k-1}\left(\displaystyle\frac34-t\right)^k,$$
$g:=0$, $a:=0$ and~$b:=\frac34$,
we obtain that~$\tilde\psi_0\in C^{k,\alpha}_{0}$, where
$$ \tilde\psi_0(t):=\begin{cases}
(-1)^{k-1}\left(\displaystyle\frac34-t\right)^k &{\mbox{ if }}t\in\left(0,\displaystyle\frac34\right),\\
0 & {\mbox{ if }}t\ge\displaystyle\frac34.             
\end{cases}$$
Then, using again
Lemma~\ref{LA2lem} with
$$f(t):=(-1)^{k-1}
\displaystyle\sum_{j=0}^{k-1}\left({k}\atop{j}\right)
\left(\displaystyle\frac34\right)^{k-j}t^j=(-1)^{k-1}\left(\displaystyle\frac34-t\right)^k-(-1)^{k-1}t^k
,$$
$g:=\tilde\psi_0$, $a:=-\infty$ and~$b:=0$,
we obtain~\eqref{8JANNasadf}, as desired.

We also notice that, for any~$t\in\left(0,\frac34\right)$, we have that
$$ \psi_0^{(k)}(t)= -k!<0,$$
and so, recalling~\eqref{asycap2}, we have that~$\kappa>0$.
\end{proof}
\end{lemma}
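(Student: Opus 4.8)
The plan is to reduce the positivity of $\kappa$ to a sign condition on $\psi_0^{(k)}$ and then exhibit an explicit piecewise $\psi_0$ that realizes it. Looking at the formula~\eqref{asycap2} for $\kappa$, observe that for $\omega\in(0,3/4)$ one has $1-\omega>0$, hence $(1-\omega)^{k-\alpha-1}>0$, that for $z\in(0,1)$ one has $(1-z)^{\alpha-1}>0$, and that $\Gamma(\alpha)\Gamma(k-\alpha)>0$. Consequently the double integral in~\eqref{asycap2} inherits the sign of $\psi_0^{(k)}$ on $(0,3/4)$, and the outer minus sign flips it. Therefore it suffices to construct a $\psi_0$ obeying all the hypotheses of Lemma~\ref{cat} for which $\psi_0^{(k)}<0$ throughout $(0,3/4)$.

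Next I would build such a $\psi_0$ in three pieces: a polynomial of degree $k-1$ on $(-\infty,0]$, the bump $(-1)^{k-1}(3/4-t)^k$ on $(0,3/4)$, and the constant $0$ on $[3/4,+\infty)$. The two structural requirements in~\eqref{EQNSMS} are then immediate: on $(-\infty,0)$ the function is a polynomial of degree at most $k-1$, so $\psi_0^{(k)}\equiv 0$ there, and on $[3/4,1]$ it vanishes identically. A direct computation on the middle piece gives $\psi_0^{(k)}(t)=(-1)^{k-1}\frac{d^k}{dt^k}(3/4-t)^k=(-1)^{k-1}(-1)^k k!=-k!<0$ for $t\in(0,3/4)$, which by the first paragraph already forces $\kappa>0$ once the remaining membership claim is in place.

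The substantive point is to check that this glued function genuinely lies in $C^{k,\alpha}_{-\infty}$, and here I would invoke the gluing result Lemma~\ref{LA2lem} twice. First, applying it at the junction $b=3/4$ to the bump and the zero function yields that the truncation $\tilde\psi_0$ supported on $[0,3/4]$ belongs to $C^{k,\alpha}_0$; the hypotheses hold because the bump and all its derivatives up to order $k-1$ vanish at $t=3/4$. Second, applying Lemma~\ref{LA2lem} at the junction $b=0$ to the degree-$(k-1)$ polynomial and $\tilde\psi_0$ pushes the initial point to $-\infty$ and delivers $\psi_0\in C^{k,\alpha}_{-\infty}$. For this second application to be legitimate, the polynomial must be chosen so that its derivatives up to order $k-1$ match those of the bump at $t=0$; concretely, one takes the degree-$(k-1)$ Taylor polynomial of $(-1)^{k-1}(3/4-t)^k$ at $0$, whose coefficients are the binomials $\binom{k}{j}(3/4)^{k-j}$ (up to sign).

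The main obstacle I anticipate is precisely this $C^{k-1}$ matching at $t=0$: one must verify that the polynomial tail agrees with the middle piece to order $k-1$, so that the hypotheses of the gluing lemma are truly met (the $L^1$ integrability of $\Theta_{k,\alpha,\psi_0,t}$ near $-\infty$ and near the junctions is then absorbed into that lemma). Once the two gluings are justified, combining the membership $\psi_0\in C^{k,\alpha}_{-\infty}$ with the sign computation $\psi_0^{(k)}=-k!<0$ on $(0,3/4)$ and the sign analysis of~\eqref{asycap2} closes the argument with $\kappa>0$.
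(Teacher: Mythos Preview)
Your proposal is correct and follows essentially the same approach as the paper: the paper constructs the identical three-piece function $\psi_0$ (Taylor polynomial of degree $k-1$ on $(-\infty,0]$, the bump $(-1)^{k-1}(3/4-t)^k$ on $(0,3/4)$, and $0$ beyond), applies Lemma~\ref{LA2lem} twice at the junctions $b=3/4$ and $b=0$ to obtain $\psi_0\in C^{k,\alpha}_{-\infty}$, and concludes $\kappa>0$ from $\psi_0^{(k)}=-k!<0$ on $(0,3/4)$. Your explicit sign analysis of the integrand in~\eqref{asycap2} merely spells out what the paper compresses into ``recalling~\eqref{asycap2}''.
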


Now we point out that the function built in Lemmata \ref{cat}
and~\ref{SEGNP} can be conveniently rescaled, taking advantage of the scaling invariance
of the operator, and in this way one can single out the boundary behavior.
Namely, we have that:

\begin{lemma}
\label{lemblo}
There exists a sequence $(v_j)_{j\in\mathbb{N}}$ of functions $v_j\in C_{-\infty}^{k,\alpha}\cap C^\infty((0,+\infty))$ such that, for any $j\in\mathbb{N}$, $v_j$ solves the following problem
\begin{equation}
\label{problow}
\begin{cases}
D^\alpha_{-\infty}v_j(t)=0&\quad\text{in}\quad(0,+\infty), \\
v_j(t)=0&\quad\text{in}\quad\left[-\frac{j}{4},0\right],
\end{cases}
\end{equation}
and, for any $t>0$, 
\begin{equation}\label{asuvdfasdf}
\lim_{j\to+\infty}v_j(t)=\kappa t^\alpha,\end{equation} for some~$\kappa>0$,
and the convergence is uniform on any bounded subinterval of $(0,+\infty)$.

In addition, 
\begin{equation}\label{acostPA}
v^{(k)}_j=0\,\mbox{ in }\,(-\infty,-j).
\end{equation}

\begin{proof}
Let $\psi$ be the 
function given in Lemmata \ref{cat}
and~\ref{SEGNP}, used here with~$b:=-\infty$,
and define for any $j\in\mathbb{N}$
\begin{equation*}
v_j(t):=j^\alpha\psi\left(\frac{t}{j}+1\right).
\end{equation*}
Since $\psi\in C_{-\infty}^{k,\alpha}\cap C^\infty((1,+\infty))$,
we have that $v_j\in C_{-\infty}^{k,\alpha}\cap C^\infty((0,+\infty))$. 

We claim that for any $j\in\mathbb{N}$, $v_j$ solves \eqref{problow}. 
Indeed, recalling~\eqref{EQNSMS} and~\eqref{vpv}, for any~$t\in\left[-\frac{j}4,0\right]$
we have that~$\frac{t}{j}+1\in\left[\frac{3}4,1\right]$, and
$$ v_j(t)=j^\alpha\psi\left(\frac{t}{j}+1\right)=j^\alpha\psi_0\left(\frac{t}{j}+1\right)=0.
$$
Moreover, if~$t>0$, we have that~$\frac{t}{j}+1>1$ and therefore,
using the change of variables~$y:=\frac{\tau}{j}+1$ and~\eqref{vpv},
\begin{equation*}
\begin{split}
D^\alpha_{-\infty}v_j(t)&=\frac{1}{\Gamma(k-\alpha)}\int_{-\infty}^t v_j^{(k)}(\tau)(t-\tau)^{k-\alpha-1}d\tau \\
&=\frac{j^{\alpha-k}}{\Gamma(k-\alpha)}\int_{-\infty}^t\psi^{(k)}\left(\frac{\tau}{j}+1\right)(t-\tau)^{k-\alpha-1}d\tau
\\&=\frac{j^{\alpha-k+1}}{\Gamma(k-\alpha)}\int_{-\infty}^{\frac{t}{j}+1}\psi^{(k)}(y)\left(t-j(y-1)\right)^{k-\alpha-1}dy \\
&=\frac{1}{\Gamma(k-\alpha)}\int_{-\infty}^{\frac{t}{j}+1}\psi^{(k)}(y)
\left(\frac{t}{j}+1-y\right)^{k-\alpha-1}dy \\&=D_{-\infty}^\alpha\psi\left(\frac{t}{j}+1\right)\\&=0.
\end{split}
\end{equation*}
This proves~\eqref{problow}.

Now, let~$I$ be a bounded subinterval of~$(0,+\infty)$.
Using formula \eqref{asycap}, for $t>0$ and for large $j$, we have that
\begin{eqnarray*}&&
\sup_{t\in I}|v_j(t)-\kappa t^\alpha|=
\sup_{t\in I}\left|
j^\alpha\psi\left(\frac{t}{j}+1\right)
-\kappa t^\alpha\right|
=\sup_{t\in I}\left|
j^\alpha\left(\kappa\frac{t^\alpha}{j^\alpha}+o\left(\frac{t^\alpha}{j^\alpha}\right)\right)
-\kappa t^\alpha\right|\\&&\qquad
=\sup_{t\in I}
j^\alpha\,o\left(\frac{t^\alpha}{j^\alpha}\right)=
j^\alpha\,o\left(\frac{1}{j^\alpha}\right)=o(1),
\end{eqnarray*}
thus proving the desired asymptotics in~\eqref{asuvdfasdf},
and~$\kappa>0$ here in view of Lemma~\ref{SEGNP}.

Finally, recalling~\eqref{EQNSMS} and~\eqref{vpv}, we have that if~$t<-j$
$$ v_j^{(k)}(t)=j^{\alpha-k}\psi^{(k)}\left(\frac{t}{j}+1\right)=
j^{\alpha-k}\psi^{(k)}_0\left(\frac{t}{j}+1\right)=0,$$
and this proves \eqref{acostPA}.
\end{proof}
\end{lemma}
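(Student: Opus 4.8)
The plan is to obtain the sequence $(v_j)$ by rescaling the single Caputo-stationary profile produced in Lemmas~\ref{cat} and~\ref{SEGNP}. Concretely, I would fix the function $\psi\in C_{-\infty}^{k,\alpha}\cap C^\infty((1,+\infty))$ given by those two lemmas with the choice $b:=-\infty$, so that $D_{-\infty}^\alpha\psi=0$ in $(1,+\infty)$, $\psi=\psi_0$ in $(-\infty,1]$, and the sharp boundary expansion
$$\psi(1+\epsilon)=\kappa\,\epsilon^\alpha+o(\epsilon^\alpha)\qquad\text{as }\epsilon\to0^+$$
holds with $\kappa>0$ (the positivity being exactly what Lemma~\ref{SEGNP} secures). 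I would then set $v_j(t):=j^\alpha\,\psi\!\left(\tfrac{t}{j}+1\right)$. Since translation by $1$ and dilation by $j$ leave the classes $C_{-\infty}^{k,\alpha}$ and $C^\infty$ invariant, the membership $v_j\in C_{-\infty}^{k,\alpha}\cap C^\infty((0,+\infty))$ is immediate from the corresponding properties of $\psi$.

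Next, I would check that each $v_j$ solves~\eqref{problow}. The vanishing condition is purely a matter of bookkeeping on the argument: if $t\in\left[-\tfrac{j}{4},0\right]$ then $\tfrac{t}{j}+1\in\left[\tfrac34,1\right]$, the interval on which $\psi=\psi_0\equiv0$ by~\eqref{EQNSMS} and~\eqref{vpv}, so $v_j(t)=0$ there. For the equation, the key structural fact is that $D_{-\infty}^\alpha$ commutes with this rescaling. Writing $v_j^{(k)}(\tau)=j^{\alpha-k}\psi^{(k)}\!\left(\tfrac{\tau}{j}+1\right)$ and performing the substitution $y:=\tfrac{\tau}{j}+1$ in the defining integral~\eqref{defcap}, one finds that the three powers of $j$ coming from $v_j^{(k)}$, from $(t-\tau)^{k-\alpha-1}=j^{k-\alpha-1}\bigl(\tfrac{t}{j}+1-y\bigr)^{k-\alpha-1}$, and from $d\tau=j\,dy$ combine to give the exponent $(\alpha-k)+(k-\alpha-1)+1=0$, so that $D_{-\infty}^\alpha v_j(t)=D_{-\infty}^\alpha\psi\!\left(\tfrac{t}{j}+1\right)$. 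Since $\tfrac{t}{j}+1>1$ for $t>0$, the right-hand side vanishes by~\eqref{vpv}, establishing~\eqref{problow}.

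The heart of the matter is the limit~\eqref{asuvdfasdf}, and this is where the sharp asymptotics from Lemma~\ref{cat} is indispensable. Taking $\epsilon:=\tfrac{t}{j}$, which tends to $0^+$ uniformly as $j\to+\infty$ for $t$ in a bounded subinterval $I\subset(0,+\infty)$, I would substitute the expansion~\eqref{asycap} to obtain
$$v_j(t)=j^\alpha\left(\kappa\,\frac{t^\alpha}{j^\alpha}+o\!\left(\frac{t^\alpha}{j^\alpha}\right)\right)=\kappa\,t^\alpha+j^\alpha\,o\!\left(\frac{1}{j^\alpha}\right),$$
so that $\sup_{t\in I}|v_j(t)-\kappa t^\alpha|=o(1)$. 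I expect the only genuinely delicate point to be conceptual rather than computational: one must be confident that the error in~\eqref{asycap} is controlled uniformly in the relevant regime, so that multiplying by $j^\alpha$ and taking the supremum over the bounded set $I$ does not destroy the decay; this is precisely where having isolated a clean, quantitative boundary profile in Lemma~\ref{cat} pays off. Finally, the support-type property~\eqref{acostPA} follows from the same scaling identity $v_j^{(k)}(t)=j^{\alpha-k}\psi^{(k)}\!\left(\tfrac{t}{j}+1\right)$: when $t<-j$ the argument $\tfrac{t}{j}+1$ is negative, and there $\psi^{(k)}=\psi_0^{(k)}\equiv0$ by~\eqref{EQNSMS}, so $v_j^{(k)}(t)=0$.
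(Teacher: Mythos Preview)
Your proposal is correct and follows essentially the same approach as the paper: you define $v_j(t):=j^\alpha\psi(t/j+1)$ from the profile of Lemmata~\ref{cat} and~\ref{SEGNP} with $b=-\infty$, verify \eqref{problow} and \eqref{acostPA} by the same change of variables and bookkeeping, and deduce \eqref{asuvdfasdf} from the expansion~\eqref{asycap} with $\epsilon=t/j$. The only minor difference is presentational---you package the scaling of $D^\alpha_{-\infty}$ as a cancellation of exponents rather than writing out the intermediate integrals---but the argument is the same.
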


\subsection{Maximal span property and proof of Theorem~\ref{dens}}

We now exploit a method introduced in~\cite{MR3626547}
and we take advantage of the boundary asymptotics established
in Lemma~\ref{SEGNP} to construct solutions of linear time-fractional
equations with a prescribed jet of derivatives at a point. {F}rom this,
the proof of Theorem~\ref{dens} will be completed, by polynomial approximation
and scaling. In our strategy is also technically more convenient to
prove a slightly different modified version of Theorem~\ref{dens},
in which the initial point in which the Caputo derivative is~$-\infty$
and the approximating function is constant near~$-\infty$. Namely, we prove the following result:

\begin{theorem}
\label{dens2}
Let $h\in\mathbb{N}_0$, $k\in\mathbb{N}$, and~$\alpha\in(k-1,k)$. 

Then, for every $f\in C^h([0,1])$ and $\epsilon>0$, there exist $a<0$
and $u\in C^{k,\alpha}_{-\infty}$ such that 
\begin{eqnarray}
\label{CLAIM12}&&D_{-\infty}^\alpha u(t)=0\quad\text{in}\quad[0,+\infty)\\
\label{CLAIMADDP} && u^{(k)}=0\quad\text{for all }t\in(-\infty,a]\\
\label{CLAIM22}{\mbox{and }}&&\left\|u-f\right\|_{C^h([0,1])}<\epsilon.
\end{eqnarray}
\end{theorem}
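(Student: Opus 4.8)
The plan is to produce $u$ as a finite linear combination of rescaled and translated copies of the genuinely Caputo-stationary functions $v_j$ furnished by Lemma~\ref{lemblo}, exploiting the fractional boundary profile $\kappa t^\alpha$ to make the jet of derivatives at a point essentially free. First I would reduce the problem: since polynomials are dense in $C^h([0,1])$ (approximate $f^{(h)}$ uniformly by Weierstrass and integrate $h$ times), and since $D^\alpha_{-\infty}$ is linear, it suffices to approximate in $C^h([0,1])$, by a single Caputo-stationary function, an arbitrary monomial $t^n$; a general polynomial is then handled by summing the corresponding approximants, the sum being again Caputo-stationary.

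For the spanning step I would work with the smooth limit profiles $\Phi_\sigma(t):=\kappa(t-\sigma)^\alpha$, for $\sigma<0$, which are $C^\infty$ on a neighbourhood of $[0,1]$. A change of variables shows that $D^\alpha_{-\infty}$ is invariant under translations and dilations, namely $D^\alpha_{-\infty}[v(\lambda\cdot-\sigma)](t)=\lambda^\alpha(D^\alpha_{-\infty}v)(\lambda t-\sigma)$, so each $\Phi_\sigma$ is a limit, as $j\to+\infty$, of the genuinely stationary functions $v_j(\cdot-\sigma)$ of Lemma~\ref{lemblo}. Fixing $N:=n+h$ and choosing $N+1$ distinct points $\sigma_0,\dots,\sigma_N<0$, the jet matrix $M_{m,i}:=\Phi_{\sigma_i}^{(m)}(0)=\kappa\,\alpha(\alpha-1)\cdots(\alpha-m+1)\,(-\sigma_i)^{\alpha-m}$ factorises, after pulling out the nonzero row factors $\kappa\,\alpha\cdots(\alpha-m+1)$ (nonzero since $\alpha\notin\mathbb{Z}$) and the positive column factors $(-\sigma_i)^{\alpha}$, into a Vandermonde matrix in the distinct nodes $1/(-\sigma_i)$. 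Hence $M$ is invertible, and I can solve for coefficients $\beta_i$ so that $w:=\sum_i\beta_i\Phi_{\sigma_i}$ agrees with $t^n$ to order $N$ at the origin.

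Next comes the scaling. By Taylor's theorem $(w-t^n)^{(m)}(t)=O(t^{N+1-m})$ on $[0,1]$ for $m\le h$, so setting $w_r(t):=r^{-n}w(rt)$ and differentiating gives $\|w_r-t^n\|_{C^h([0,1])}=O(r^{N+1-n})=O(r^{h+1})\to0$ as $r\to0^+$. By the invariance above, the genuinely stationary counterpart of $w_r$ is $W_{r,j}:=\sum_i\beta_i r^{-n}v_j(r\cdot-\sigma_i)$, which by \eqref{problow} solves $D^\alpha_{-\infty}W_{r,j}=0$ on $(0,+\infty)$ (and at $t=0$ by continuity), lies in $C^{k,\alpha}_{-\infty}$, and by \eqref{acostPA} satisfies $W_{r,j}^{(k)}=0$ on $(-\infty,a]$ with $a:=(\min_i\sigma_i-j)/r<0$. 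Choosing first $r$ small and then $j$ large, and combining the bounds on $w_r-t^n$ and on $W_{r,j}-w_r$, yields $\|W_{r,j}-t^n\|_{C^h([0,1])}<\epsilon$; collecting the monomial approximants for a polynomial $P$ with $\|f-P\|_{C^h([0,1])}<\epsilon/2$, and taking a common initial point $a$ as the minimum of the finitely many values above, produces the desired $u$.

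The delicate point, which I expect to be the main obstacle, is that Lemma~\ref{lemblo} only gives $v_j\to\kappa t^\alpha$ in $C^0$ on compact subsets of $(0,+\infty)$, whereas the last step needs $W_{r,j}\to w_r$ in $C^h([0,1])$, i.e.\ $v_j(r\cdot-\sigma_i)\to\Phi_{\sigma_i}(r\cdot)$ in $C^h$ on $[0,1]$ (which maps, since $\sigma_i<0$, into a compact subinterval of $(0,+\infty)$ bounded away from $0$). I would upgrade the convergence to $C^h_{\mathrm{loc}}((0,+\infty))$ by differentiating the boundary asymptotics: from $v_j^{(m)}(t)=j^{\alpha-m}\psi^{(m)}(t/j+1)$ and the expansion of $\psi$ near $1^+$ obtained by differentiating the representation of $\psi$ in Lemma~\ref{cat} under the integral sign (legitimate because the associated source is $C^\infty$ up to the boundary), one gets $\psi^{(m)}(1+\eta)=\kappa\frac{\Gamma(\alpha+1)}{\Gamma(\alpha+1-m)}\eta^{\alpha-m}+o(\eta^{\alpha-m})$, whence $v_j^{(m)}(t)\to(\kappa t^\alpha)^{(m)}$ uniformly on compacts; alternatively, interior estimates for the stationary equation bound $v_j$ in $C^h_{\mathrm{loc}}$ and Arzel\`a--Ascoli identifies the limit. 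Establishing this derivative-level convergence is the technical heart; the Vandermonde span, the scaling, and the polynomial reduction are then routine, completing the proof of Theorem~\ref{dens2}.
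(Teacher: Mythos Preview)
Your proposal is correct, and the overall architecture (reduce to monomials, span the jet, rescale) matches the paper, but the implementation of the spanning step is genuinely different.

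The paper does not build the spanning function as an explicit linear combination of translates. Instead, Proposition~\ref{maxcapspan} argues abstractly: one considers the set~$\mathcal{V}$ of all jets $(v(t),\dots,v^{(m)}(t))$ with $v$ admissible and $t>0$; if $\mathcal{V}\ne\mathbb{R}^{m+1}$ there is a nonzero vector $(c_0,\dots,c_m)$ orthogonal to every such jet, hence $\sum_i c_i v_j^{(i)}(t)=0$ for all $t>0$ and all $j$. The key trick is that this identity is tested \emph{weakly}: integrating against $\varphi\in C_c^\infty((0,\infty))$ and throwing the derivatives onto $\varphi$ by parts, one only needs the $C^0$ convergence $v_j\to\kappa t^\alpha$ supplied by Lemma~\ref{lemblo} to pass to the limit and obtain $\sum_i c_i(\kappa t^\alpha)^{(i)}\equiv0$, whence $c_i=0$ since $\alpha\notin\mathbb{Z}$. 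This yields a single $v$ with $v^{(l)}(p)=\delta_{l,m}$, which is then rescaled about $p$ exactly as you rescale about $0$.

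Your route replaces this with the concrete Vandermonde computation on the translates $\Phi_{\sigma_i}$, which is clean and explicit, but forces you to upgrade Lemma~\ref{lemblo} from $C^0_{\mathrm{loc}}$ to $C^h_{\mathrm{loc}}$ convergence. Your plan to do so via the expansion of $\psi^{(m)}(1+\eta)$ is sound: writing $\Gamma(\alpha)\psi(1+\eta)=\int_0^\eta g(1+\eta-\sigma)\sigma^{\alpha-1}d\sigma$ with $g\in C^\infty([1,\infty))$ and differentiating (picking up the boundary term $g(1)\eta^{\alpha-1}$ etc.) gives precisely $\psi^{(m)}(1+\eta)=\kappa\,\Gamma(\alpha+1)/\Gamma(\alpha+1-m)\,\eta^{\alpha-m}+o(\eta^{\alpha-m})$, from which $v_j^{(m)}\to(\kappa t^\alpha)^{(m)}$ locally uniformly. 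The alternative ``interior estimates plus Arzel\`a--Ascoli'' suggestion is vaguer and would need justification; stick with the direct asymptotics. In short: the paper buys economy by using integration by parts to avoid any derivative-level convergence, while your approach buys an explicit construction at the price of proving that extra convergence.
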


By Lemma~\ref{AfavaadfeAU}, we observe that
\begin{equation}\label{IMPLIES}
{\mbox{Theorem~\ref{dens2} implies Theorem~\ref{dens}.}}
\end{equation}
Hence, in light of~\eqref{IMPLIES}, to prove Theorem~\ref{dens}
we will focus on the proof of Theorem~\ref{dens2}. For this,
one of the crucial arguments is given by the following ``cherry picking''
result:

\begin{proposition}
\label{maxcapspan}
For any $m\in\mathbb{N}$, there exist~$p>0$, $R>0$, and~$v\in C_{-\infty}^{k,\alpha}\cap C^\infty((0,+\infty))$ such that
\begin{equation}\label{UHAASasdIDKF}
\begin{cases}
D^\alpha_{-\infty}v(t)=0&\quad\text{for all}\quad t\in(0,+\infty), \\
v^{(k)}=0&\quad\text{in}\quad (-\infty,-R),
\end{cases}
\end{equation}
\begin{equation*}
v^{(l)}(p)=0\quad{\mbox{ for any }}\; l\in\{0,\dots,m-1\},\end{equation*} 
and 
\begin{equation*}
v^{(m)}(p)=1.\end{equation*}

\begin{proof}
Let~$\mathcal{Z}_0$ be the set containing all the functions~$v
\in C_{-\infty}^{k,\alpha}\cap
C^\infty((0,+\infty))$ such that
\begin{equation}\label{v1v1}
D^\alpha_{-\infty}v=0\quad\text{in}\quad(0,+\infty),\end{equation}
and for which there exists~$R>0$ such that
\begin{equation}
\label{v1v2}
v^{(k)}=0\quad\text{in}\quad(-\infty,-R).\end{equation}
Let also~$\mathcal{Z}:=\mathcal{Z}_0
\times(0,+\infty)$.

To each pair $(v,t)\in\mathcal{Z}$ we associate the vector
$(v(t),v'(t),\ldots,v^{(m)}(t))\in\mathbb{R}^{m+1}$ and 
consider~$\mathcal{V}$ to be the set
\begin{equation}\label{JASNsIOKA} \mathcal{V}:=\big\{(v(t),v'(t),\ldots,v^{(m)}(t)),
\quad{\mbox{ with }}\; (v,t)\in\mathcal{Z}
\big\}.\end{equation}
We point out that
\begin{equation}\label{VbasJA}
{\mbox{$\mathcal{V}$ is a vector space.}}\end{equation}
Notice indeed that if~$(v_1,t)$, $(v_2,t)\in\mathcal{Z}$
and~$\lambda_1$, $\lambda_2\in\mathbb{R}$, we have that~$v_i\in C_{-\infty}^{k,\alpha}\cap
C^\infty((0,+\infty))$,
and therefore~$v_*:=\lambda_1 v_1+\lambda_2 v_2\in
C_{-\infty}^{k,\alpha}\cap
C^\infty((0,+\infty))$.
Also, $v_*$ satisfies~\eqref{v1v1} by linearity of the operator~$D^\alpha_{-\infty}$.
In addition, by~\eqref{v1v2}, for each~$i\in\{1,2\}$
we know that~$v^{(k)}_i=0$ in~$(-\infty,-R_i)$
for some~$R_i>0$, and therefore $v_*$ satisfies~\eqref{v1v2}
with~$R:=\max\{R_1,R_2\}>0$. This completes the proof of~\eqref{VbasJA}.

Now, we claim that
\begin{equation}\label{CLSPA}
\mathcal{V}=\mathbb{R}^{m+1}.\end{equation}
To check this, we suppose by contradiction that~$\mathcal{V}$ lies in a 
proper subspace of~$\mathbb{R}^{m+1}$. Then, by~\eqref{VbasJA},
$\mathcal{V}$ must lie in a
hyperplane, hence there exists 
\begin{equation}
\label{cnonull}
(c_0,\ldots,c_m)\in\mathbb{R}^{m+1}\setminus\left\{0\right\} 
\end{equation}
which is orthogonal to any vector $(v(t),\ldots,v^{(m)}(t))$ with $(v,t)\in\mathcal{Z}$, 
namely
\begin{equation}
\label{prp}
\sum_{i=0}^m c_i v^{(i)}(t)=0.
\end{equation}
We notice that for any $j\geq 1$ the pair $(v_j,t)$, with $v_j$ satisfying \eqref{problow}
and~\eqref{acostPA},
and $t\in(0,+\infty)$, belongs to~$\mathcal{Z}$. Consequently, writing~\eqref{prp}
in this case, it follows that, for any~$j\geq 1$,
\begin{equation}
\label{prp2}
\sum_{i=0}^m c_i v_j^{(i)}(t)=0.
\end{equation}
Let now~$\varphi\in C_c^\infty((0,+\infty))$. 
Integrating by parts,
by Lemma \ref{lemblo} and the Dominated Convergence Theorem, 
we have that, for any $i\in\mathbb{N}$,
\begin{eqnarray*}
&&\lim_{j\to+\infty}\int_{-\infty}^{+\infty}v_j^{(i)}(t)\varphi(t)dt
=(-1)^i\lim_{j\to+\infty}\int_{-\infty}^{+\infty}v_j(t)\varphi^{(i)}(t)dt\\
&&\qquad=(-1)^i\int_{-\infty}^{+\infty}\kappa t^\alpha\varphi^{(i)}(t)dt
=\kappa\int_{-\infty}^{+\infty}(t^\alpha)^{(i)}\varphi(t)dt.\end{eqnarray*}
Multiplying by $c_i$ and summing up, 
recalling also~\eqref{prp2},
we thereby obtain that
\[0=\lim_{j\to+\infty}\int_{-\infty}^{+\infty}\sum_{i=0}^m c_i v_j^{(i)}(t)
\varphi(t)dt=\kappa\int_{-\infty}^{+\infty}\sum_{i=0}^m c_i(t^\alpha)^{(i)}\varphi(t)dt,\] 
for any $\varphi\in C_c^\infty((0,+\infty))$. 

This gives that, for every $t\in(0,+\infty)$,
\[0=\kappa\sum_{i=0}^m c_i(t^\alpha)^{(i)}=\kappa\sum_{i=0}^m c_i
\alpha(\alpha-1)\ldots(\alpha-i+1)t^{\alpha-i}.\] 
Then, we divide this relation by $\kappa>0$ and multiply by $t^{m-\alpha}$: 
in this way we obtain that,
for every $t\in(0,+\infty)$,
\[\sum_{i=0}^m c_i\alpha(\alpha-1)\ldots(\alpha-i+1)t^{m-i}=0.\] 
The identity above describes
a polynomial which vanishes for any $t\in(0,+\infty)$. As a result, the Identity Principle
for polynomials leads to
$$ c_i\alpha(\alpha-1)\ldots(\alpha-i+1)=0,$$
for all~$i\in\{0,\dots,m\}$.

Consequently, since $\alpha\in\mathbb{R}\setminus\mathbb{N}$, 
the product $\alpha(\alpha-1)\ldots(\alpha-i+1)$ never vanishes, 
and so the coefficients $c_i$ are forced to be null for any $i\in\{0,\dots,m\}$. 
This is in contradiction with~\eqref{cnonull}, and therefore the proof
of~\eqref{CLSPA} is complete.

{F}rom this, the desired claim in
Proposition~\ref{maxcapspan} plainly follows.
\end{proof}
\end{proposition}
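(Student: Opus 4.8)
The plan is to use the \emph{derivative spanning} (or \emph{maximal span}) method: rather than constructing the function $v$ directly, I would show that the collection of all attainable derivative jets is the full Euclidean space $\mathbb{R}^{m+1}$, and then simply read off from this surjectivity the vector $(0,\dots,0,1)$ that encodes the prescribed jet. This reduces an apparently delicate construction to a linear-algebraic dimension count, at the cost of one limiting argument.

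First, I would introduce the set $\mathcal{Z}_0$ of all admissible functions $v \in C_{-\infty}^{k,\alpha} \cap C^\infty((0,+\infty))$ satisfying the homogeneous equation $D^\alpha_{-\infty}v = 0$ in $(0,+\infty)$ together with the eventual-flatness condition $v^{(k)} = 0$ in $(-\infty,-R)$ for some $R>0$. Setting $\mathcal{Z} := \mathcal{Z}_0 \times (0,+\infty)$ and associating to each pair $(v,t)$ the jet $(v(t),\dots,v^{(m)}(t))$, I would define $\mathcal{V} \subseteq \mathbb{R}^{m+1}$ as the set of all such jets. The first point to verify is that $\mathcal{V}$ is a linear subspace: this follows from the linearity of the operator $D^\alpha_{-\infty}$, so that real combinations of solutions remain solutions, and from the fact that two flatness radii $R_1, R_2$ can be merged into $\max\{R_1,R_2\}$, while both pairs share the same base point $t$.

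The heart of the argument, and the step I expect to be the main obstacle, is proving $\mathcal{V} = \mathbb{R}^{m+1}$. I would argue by contradiction: if $\mathcal{V}$ were proper, it would lie in a hyperplane, yielding a nonzero covector $(c_0,\dots,c_m)$ with $\sum_{i=0}^m c_i v^{(i)}(t) = 0$ for every $(v,t) \in \mathcal{Z}$. The crucial device is then to feed into this relation the rescaled sequence $(v_j)$ produced by Lemma~\ref{lemblo}, which lies in $\mathcal{Z}_0$ and converges to $\kappa t^\alpha$ uniformly on bounded subintervals of $(0,+\infty)$ with $\kappa > 0$. Testing the relation against an arbitrary $\varphi \in C_c^\infty((0,+\infty))$, integrating by parts to transfer the derivatives onto $\varphi$, and passing to the limit $j \to +\infty$ by dominated convergence, I would obtain $\kappa \sum_{i=0}^m c_i (t^\alpha)^{(i)} = 0$ distributionally, hence pointwise for all $t > 0$. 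The subtlety here is ensuring the $v_j$ genuinely sit in $\mathcal{Z}$ and that the weak limit correctly identifies the limiting jet with that of the non-integer power $t^\alpha$.

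Finally, I would make the relation explicit: since $(t^\alpha)^{(i)} = \alpha(\alpha-1)\cdots(\alpha-i+1)\, t^{\alpha-i}$, dividing by $\kappa>0$ and multiplying by $t^{m-\alpha}$ converts the identity into a genuine polynomial identity $\sum_{i=0}^m c_i\,\alpha(\alpha-1)\cdots(\alpha-i+1)\, t^{m-i} = 0$ on $(0,+\infty)$. The Identity Principle for polynomials forces every coefficient to vanish; and because $\alpha \in (k-1,k)$ is non-integer, each falling-factorial product $\alpha(\alpha-1)\cdots(\alpha-i+1)$ is nonzero, so all $c_i = 0$, contradicting the nontriviality of the covector. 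This establishes $\mathcal{V} = \mathbb{R}^{m+1}$, and choosing the pair $(v,p)$ whose jet equals $(0,\dots,0,1)$ yields the asserted $v$, together with the radius $R$ recorded in its membership in $\mathcal{Z}_0$, completing the proof.
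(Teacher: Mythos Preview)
Your proposal is correct and follows essentially the same approach as the paper: you define the same jet set $\mathcal{V}$, verify it is a linear subspace, argue by contradiction via a nonzero annihilating covector, feed in the sequence $(v_j)$ from Lemma~\ref{lemblo}, pass to the limit $\kappa t^\alpha$ after integration by parts, and conclude via the polynomial identity and the fact that $\alpha\notin\mathbb{N}$. The steps, their order, and the key devices all match the paper's proof.
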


Now, we use the function constructed in Proposition \ref{maxcapspan} to prove Theorem \ref{dens}. 

\begin{proof}[Proof of Theorem \ref{dens}]
In light of~\eqref{IMPLIES}, we focus on the proof of
Theorem \ref{dens2}.
Also,
it is sufficient to prove Theorem \ref{dens2} for monomials:
indeed, once proved in this case, the claim is true for polynomials simply
by linearity of the operator~$D^\alpha_{-\infty}$, and then
it is also true for smooth functions, by exploiting the density
of polynomials in the space $C^h([0,1])$ given by the Stone-Weierstra{\ss} Theorem.

Therefore, in place of the generic function~$f$ in Theorem \ref{dens2},
we can consider the monomial~$q_m(t):=\frac{t^m}{m!}$.
We take~$p$, $R>0$, and~$v$ as in Proposition \ref{maxcapspan}.
Also, let
\begin{equation}\label{deltach}
\delta\in(0,1)\end{equation}
to be chosen conveniently in the sequel. 

Let $u$ be the function \begin{equation}\label{gAJAAAsfdf}
u(t):=\frac{v(\delta t+p)}{\delta^m}.\end{equation}
By construction, we have that~$u\in C^{k,\alpha}_{-\infty}\cap
C^\infty\left(\left(-\frac{p}{\delta},+\infty\right)\right)$, and, for any~$t\in\left(-\frac{p}{\delta},+\infty\right)$,
\begin{equation*}
\begin{split}
\Gamma(k-\alpha)D^{\alpha}_{-\infty}u(t)&=\int_{-\infty}^t u^{(k)}(\tau)(t-\tau)^{k-\alpha-1}d\tau \\
&=\delta^{k-m}\int_{-\infty}^t v^{(k)}(\delta\tau+p)(t-\tau)^{k-\alpha-1}d\tau \\
&=\delta^{k-m}\delta^{\alpha-k+1}\int_{-\infty}^{\delta t+p}v^{(k)}(y)(\delta t+p-y)^{k-\alpha-1}\frac{dy}{\delta} \\
&=\delta^{\alpha-m}\Gamma(k-\alpha)D^\alpha_{-\infty}v(\delta t+p)
\\&=0.
\end{split}
\end{equation*}
Then,
we have that $u\in C^{k,\alpha}_{-\infty}$, and since $-\frac{p}{\delta}<0$, we also see that~$
D^\alpha_{-\infty}u(t)=0$ in $[0,+\infty)$.

This proves~\eqref{CLAIM12}. In addition, we see that~\eqref{CLAIMADDP} 
follows from~\eqref{UHAASasdIDKF} and~\eqref{gAJAAAsfdf}
(taking~$a:=\frac{-R-p}{\delta}$).

Hence, we now focus on the proof of~\eqref{CLAIM22}.
For this, we observe that
\[u^{(l)}(0)=\delta^{l-m}v^{(l)}(p)=0,\qquad\quad{\mbox{for any }}\; l\in\{0,\dots,m-1\}\]
and \[u^{(m)}(0)=v^{(m)}(p)=1.\]
Now, for any $t>-\frac{p}{\delta}$, we set \[g(t):=u(t)-q_m(t).\] 
We have that 
\begin{equation}
\label{preder}
\begin{split}
&g^{(l)}(0)=0,\qquad\quad{\mbox{for any }}\; l\in\{0,\dots, m\},\qquad\quad\text{and} \\
&g^{(m+l)}(t)=u^{(m+l)}(t)=\delta^l v^{(m+l)}(\delta t+p),
\qquad\quad{\mbox{for any }}\; l\in\{ 1,2,\dots\}.
\end{split}
\end{equation}
Hence, for all~$t\in[0,1]$ and~$l\in\{ 1,2,\dots\}$,
\begin{equation}
\label{cindu}
|g^{(m+l)}(t)|\leq \delta^l\sup_{y\in[p,p+\delta]}|v^{(m+l)}(y)|=\tilde{c}\delta^l,
\end{equation}
where $\tilde{c}$ is a positive constant, depending on~$v$, $m$ and~$l$.

Now, we consider the derivative of order $j\in\{0,\dots,h\}$ of $g$
(with the notation that the derivative of order zero coincides with the function itself),
and we take its Taylor expansion with Lagrange remainder. 
In this way,
in view of \eqref{preder},
we have that, for every~$t\in(0,1)$,
\[g^{(j)}(t)=\sum_{i=\max\left\{j,m+1\right\}}^{j+m+1}g^{(i)}(0)
\frac{t^{i-j}}{(i-j)!}+g^{(m+h+2)}(c)\frac{t^{m+2}}{(m+2)!},\] 
for some $c\in(0,t)$ possibly depending on~$j$, $m$ and~$t$.

As a consequence,
using \eqref{deltach} and~\eqref{cindu}, and possibly
renaming the constants, we obtain that, for any $t\in[0,1]$,
\[ |q_m^{(j)}(t)-u^{(j)}(t)|=|g^{(j)}(t)|\leq C\delta,\]
with~$C>0$ possibly depending on~$j$ and~$m$.
By summing this inequality over~$j\in\{1,\dots,h\}$,
and by choosing~$\delta$ sufficiently small with respect to~$\epsilon$,
we complete the proof of~\eqref{CLAIM22}.\end{proof}

\section{Proof of Corollary~\ref{densepsi}}\label{9ikHNA:SP2}

The proof of Corollary~\ref{densepsi} relies on Theorem~\ref{dens}
and on a change of variable induced by the function~$\psi$.
For this, we make the following observation:

\begin{lemma}
\label{capty}
Let $a$, $\alpha$ and $\psi$ be as in~\eqref{caputotype}. Then,
for any $t>a$,
\begin{equation}
\label{ty}
D_a^{\alpha,\psi}u(t)=D_{\psi(a)}^\alpha (u\circ\psi^{-1})(\psi(t))
.\end{equation}
\begin{proof} 
Using the change of variable $\omega:=\psi(\tau)$,
we have that $\psi'(\tau)\,d\tau
=d\omega$, and $\frac{1}{\psi'(\tau)}\frac{d}{d\tau}=
\frac{d}{d\omega}$. Thus, from~\eqref{caputotype} we see that
\[
D_a^{\alpha,\psi}u(t)=\frac{1}{\Gamma\left(k-\alpha\right)}
\int_{\psi(a)}^{\psi(t)} {\frac{(u\circ\psi^{-1})^{(k)}(\omega)}{
(\psi(t)-\omega)^{\alpha-k+1}}\, d\omega}=D_{\psi(a)}^\alpha (u\circ\psi^{-1})(\psi(t)),
\]
and this proves \eqref{ty}.
\end{proof}
\end{lemma}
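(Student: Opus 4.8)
The plan is to convert the $\psi$-Caputo derivative on the left of \eqref{ty} into an ordinary Caputo derivative by the substitution $\omega:=\psi(\tau)$. This substitution is legitimate precisely because $\psi\in\Psi_k$ forces $\psi'>0$, so $\psi$ is a strictly increasing map that is $C^k$ together with its inverse $\psi^{-1}$ (by the inverse function theorem, since $\psi'$ never vanishes). In particular $u\circ\psi^{-1}$ is $k$-times differentiable wherever $u$ is, so the right-hand side of \eqref{ty} is well defined, and $\tau=a$, $\tau=t$ correspond to $\omega=\psi(a)$, $\omega=\psi(t)$.

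The core of the argument is the chain-rule identification of the weighted iterated derivative $u^{(k)}_\psi$ with a genuine $k$-th derivative of $u\circ\psi^{-1}$. Writing $w:=u\circ\psi^{-1}$, so that $u(\tau)=w(\psi(\tau))$, I would first note that $\frac{d}{d\tau}\,w(\psi(\tau))=w'(\psi(\tau))\,\psi'(\tau)$, whence applying the operator $\frac{1}{\psi'(\tau)}\frac{d}{d\tau}$ peels off exactly one derivative of $w$ while cancelling the factor $\psi'$ produced by the chain rule, giving $\frac{1}{\psi'(\tau)}\frac{d}{d\tau}u(\tau)=w'(\psi(\tau))$. Iterating this $k$ times by induction yields the key identity
\[
u^{(k)}_\psi(\tau)=\left(\frac{1}{\psi'(\tau)}\frac{d}{d\tau}\right)^{k}u(\tau)=w^{(k)}(\psi(\tau))=(u\circ\psi^{-1})^{(k)}(\psi(\tau)).
\]

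With this in hand, I would substitute the identity into the definition \eqref{caputotype} and then change variables $\omega=\psi(\tau)$, using $\psi'(\tau)\,d\tau=d\omega$. The weight $\psi'(\tau)$ appearing in \eqref{caputotype} is exactly the Jacobian absorbed into $d\omega$, and the kernel $(\psi(t)-\psi(\tau))^{-(\alpha-k+1)}$ becomes $(\psi(t)-\omega)^{-(\alpha-k+1)}$. The transformed integral then reads verbatim as the ordinary Caputo derivative \eqref{defcap} of $u\circ\psi^{-1}$, with initial point $\psi(a)$, evaluated at $\psi(t)$, which is precisely $D_{\psi(a)}^{\alpha}(u\circ\psi^{-1})(\psi(t))$ and establishes \eqref{ty}.

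The only step requiring genuine care is the inductive identification $u^{(k)}_\psi(\tau)=(u\circ\psi^{-1})^{(k)}(\psi(\tau))$; everything after it is a single clean change of variables. I do not anticipate any obstacle beyond the bookkeeping of the chain rule, and no integrability subtlety arises, since a bijective $C^k$ change of variables carries the $\psi$-weighted kernel to the standard Caputo kernel without altering the $L^1$ nature of the integrand.
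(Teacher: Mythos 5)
Your proposal is correct and follows essentially the same route as the paper: the change of variable $\omega:=\psi(\tau)$ together with the identification $\left(\frac{1}{\psi'(\tau)}\frac{d}{d\tau}\right)^{k}u(\tau)=(u\circ\psi^{-1})^{(k)}(\psi(\tau))$, which the paper states in the compressed form $\frac{1}{\psi'(\tau)}\frac{d}{d\tau}=\frac{d}{d\omega}$. Your explicit induction on the chain rule merely fills in the bookkeeping the paper leaves implicit, so there is nothing to flag.
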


Now we can complete the proof of Corollary~\ref{densepsi}.

\begin{proof}[Proof of Corollary~\ref{densepsi}]
We claim that
\begin{equation}\label{SUS}
{\mbox{for every~$\lambda\in \big(-\infty,\psi(0)\big)$ there exists~$t_\lambda\in(-\infty,0)$
such that~$\psi(t_\lambda)=\lambda$.}}
\end{equation}
To check this, we exploit~\eqref{CPSI} to find~$\theta_\lambda<0$ such that~$\psi(\theta_\lambda)<\lambda$.
Since~$\psi(0)>\lambda$, the result in~\eqref{SUS} follows from the Mean Value Theorem.

Now, for any~$\omega\in[\psi(0),\psi(1)]$
we let~$\tilde f(\omega):=f\big(\psi^{-1}(\omega)\big)$.
Notice that~$\tilde f\in C^h([\psi(0),\psi(1)])$.
Hence, in light of Theorem \ref{dens}, we find~$\tilde a\in\big(-\infty,\psi(0)\big)$
and~$\tilde u\in C^{k,\alpha}_{\tilde a}$ such that
\begin{eqnarray*}
&&D_{\tilde a}^\alpha \tilde u(\omega)=0\quad\text{for every}\quad\omega\in[\psi(0),+\infty)\\
{\mbox{and }}&&\big\|\tilde u-\tilde f\big\|_{C^h([\psi(0),\psi(1)])}<\epsilon.\end{eqnarray*}
In light of~\eqref{SUS}, 
there exists~$a\in(-\infty,0)$ such that
\begin{equation}\label{SUSIS}\psi(a)=\tilde a.
\end{equation}
Then, we set, for any~$t\in{\mathbb{R}}$ and~$\omega\in[\psi(0),\psi(1)]$,
\begin{eqnarray*} && u(t):=\tilde u\big(\psi(t)\big)\\ {\mbox{and }}
&& \tilde v(\omega):=\tilde u(\omega)-\tilde f(\omega)=
u\big(\psi^{-1}(\omega)\big)-
f\big(\psi^{-1}(\omega)\big).\end{eqnarray*}
When~$t\in[0,1]$, we also set
$$ v(t):=\tilde v\big(\psi(t)\big)=u(t)-f(t).$$
By the Fa\`a di Bruno Formula, for any~$j\in\{0,\dots,h\}$,
\begin{eqnarray*}
&& v^{(j)}(t)=\frac{d^j}{dt^j}\tilde v\big(\psi(t)\big)=
j!\sum _{m =1}^{j}{\frac {(D^{m}\tilde v)(\psi(t))}{m !}}
\sum _{{h_{1},\cdots ,h_{m }\ge1}\atop{h_{1}+\cdots +h_{m }=j}}
\frac{D^{h_{1}}\psi(t)}{h_{1}!}\cdots\frac {D^{h_{m }}\psi(t)}{h_{m}!}
\end{eqnarray*}
and therefore
\begin{eqnarray*}&&\| u-f \|_{C^h([0,1])}=
\| v \|_{C^h([0,1])}=\sum_{j=0}^h\sup_{t\in(0,1)}|v^{(j)}(t)|\\
&&\qquad\le h! \sum_{j=0}^h\sum _{m =1}^{j}\sup_{t\in(0,1)}
|(D^{m}\tilde v)(\psi(t))|\,
\sum _{{h_{1},\cdots ,h_{m }\ge1}\atop{h_{1}+\cdots +h_{m }=j}}
|D^{h_{1}}\psi(t)|\cdots|D^{h_{m }}\psi(t)|\\
&&\qquad \le h!\,\|\tilde v\|_{C^h([\psi(0),\psi(1)])}\sum_{j=0}^h
\sum _{m =1}^{j}
\sum _{{h_{1},\cdots ,h_{m }\ge1}\atop{h_{1}+\cdots +h_{m }=j}}
\|\psi(t)\|_{C^h([0,1])}^m\\&&\qquad =C_{h,\psi}\,\|\tilde v\|_{C^h([\psi(0),\psi(1)])},
\end{eqnarray*}
for a suitable constant~$C_{h,\psi}>0$.

As a result, we have that
$$ \| u-f \|_{C^h([0,1])}\le C_{h,\psi}\,\|\tilde u-\tilde f\|_{C^h([\psi(0),\psi(1)])}\le
C_{h,\psi}\,\epsilon,$$
which is the approximation estimate claimed in Corollary~\ref{densepsi}
(up to renaming~$\epsilon$).

Furthermore, 
for any~$t\ge0$, we have that~$\psi(t)\ge\psi(0)$
and, as a consequence, by Lemma~\ref{capty}
and~\eqref{SUSIS},
$$D_a^{\alpha,\psi}u(t)=D_{\psi(a)}^\alpha (u\circ\psi^{-1})(\psi(t))=
D_{\tilde a}^\alpha \tilde u(\psi(t))=0,$$
thus completing the proof of Corollary~\ref{densepsi}.
\end{proof}

\begin{appendix}

\section{Caputo-stationary functions with vanishing $k$th derivatives
near~$-\infty$}

In this appendix, we remark that Caputo-stationary functions with initial point~$-\infty$
that have vanishing $k$th derivative
near~$-\infty$ are also Caputo-stationary for a fixed point
beyond its constancy interval. Namely, we have that:

\begin{lemma}\label{AfavaadfeAU}
Let~$a\in\mathbb{R}$. Let~$I\Subset(a,+\infty)$ be an interval.
Let~$k\in\mathbb{N}$ and~$\alpha\in(k-1,k)$, and assume 
that~$
u\in C^{k,\alpha}_{-\infty}$,
and that~$u^{(k)}=0$
in~$(-\infty,a)$.

Then, 
\begin{eqnarray}
\label{AIAKA1}&&u\in C^{k,\alpha}_{a}\\
\label{AIAKA2} {\mbox{and }}\quad &&D^\alpha_{a} u=D^\alpha_{-\infty}u\quad{\mbox{ in }}I.
\end{eqnarray}
\end{lemma}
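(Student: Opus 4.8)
The plan is to prove the two claims by directly manipulating the defining integral in~\eqref{defcap}, exploiting the hypothesis that~$u^{(k)}$ vanishes on~$(-\infty,a)$. The key observation is that when the $k$th derivative is supported in~$[a,+\infty)$, the integral defining the Caputo derivative with initial point~$-\infty$ simply truncates to an integral starting at~$a$, which is exactly the Caputo derivative with initial point~$a$.

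First I would verify~\eqref{AIAKA1}, namely that~$u\in C^{k,\alpha}_a$. By the definition in~\eqref{CkAM2}, this requires checking that~$u\in AC^{k-1}\big(\overline{(a,t)}\big)$ and that~$\Theta_{k,\alpha,u,t}\in L^1\big((a,t)\big)$ for every~$t>a$. Since~$u\in C^{k,\alpha}_{-\infty}$, we already know~$u\in AC^{k-1}\big(\overline{(-\infty,t)}\big)$, and restricting to the smaller interval~$\overline{(a,t)}$ preserves this regularity, so the absolute continuity requirement is immediate. For the integrability of~$\Theta_{k,\alpha,u,t}$, I would note that
$$\int_a^t \big|u^{(k)}(\tau)\big|\,(t-\tau)^{k-\alpha-1}\,d\tau\le
\int_{-\infty}^t \big|u^{(k)}(\tau)\big|\,(t-\tau)^{k-\alpha-1}\,d\tau<+\infty,$$
where the last quantity is finite precisely because~$u\in C^{k,\alpha}_{-\infty}$. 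This establishes~\eqref{AIAKA1}.

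Next I would prove~\eqref{AIAKA2}. Fix~$t\in I$, so that~$t>a$. Using the definition~\eqref{defcap} with initial point~$-\infty$ and splitting the integral at~$a$, I would write
$$\Gamma(k-\alpha)\,D^\alpha_{-\infty}u(t)=
\int_{-\infty}^a u^{(k)}(\tau)(t-\tau)^{k-\alpha-1}\,d\tau+
\int_a^t u^{(k)}(\tau)(t-\tau)^{k-\alpha-1}\,d\tau.$$
By hypothesis~$u^{(k)}=0$ on~$(-\infty,a)$, so the first integral vanishes, leaving exactly~$\Gamma(k-\alpha)\,D^\alpha_a u(t)$. Dividing by~$\Gamma(k-\alpha)$ gives~\eqref{AIAKA2}. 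I would make sure that the splitting of the integral is justified, which follows from the integrability already established in the previous step.

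The main obstacle here is essentially bookkeeping rather than any deep difficulty: one must be careful that the integrals are genuinely well defined and that the truncation at~$a$ is legitimate, which is guaranteed by the membership~$u\in C^{k,\alpha}_{-\infty}$ together with the vanishing of~$u^{(k)}$ near~$-\infty$. In short, the statement is a clean consequence of the locality of the Caputo derivative with respect to the support of~$u^{(k)}$, and the proof is a short direct computation once the integrability is checked.
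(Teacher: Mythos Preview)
Your proof is correct and follows essentially the same approach as the paper's: the paper simply observes that~$C^{k,\alpha}_b\subseteq C^{k,\alpha}_a$ whenever~$b\le a$ to get~\eqref{AIAKA1}, and then notes that the vanishing of~$u^{(k)}$ on~$(-\infty,a)$ makes the integral from~$-\infty$ to~$t$ coincide with the integral from~$a$ to~$t$, which is exactly your splitting argument. Your version is just slightly more explicit about the integrability check.
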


\begin{proof} By~\eqref{CkAM2}, we see that if~$b\in(-\infty,a]\cup\{-\infty\}$,
then~$C^{k,\alpha}_b\subseteq C^{k,\alpha}_a$, and so~\eqref{AIAKA1} plainly follows.
Furthermore, $u^{(k)}$ vanishes in~$(-\infty,a)$, and consequently,
for any~$t\in I$,
$$ 0=
\int_{-\infty}^t \frac{u^{(k)}\left(\tau\right)}{(t-\tau)^{\alpha-k+1}} d\tau
=\int_a^t \frac{u^{(k)}\left(\tau\right)}{(t-\tau)^{\alpha-k+1}} d\tau,$$
which proves~\eqref{AIAKA2}.
\end{proof}

A counterpart of Lemma~\ref{AfavaadfeAU}
allows us to extend a function with its Taylor polynomial maintaining
its Caputo derivative. For this, we first point out that this operation
is compatible with the functional setting
in~\eqref{CkAM2}:

\begin{lemma}\label{LA2lem}
Let~$a\in\mathbb{R}\cup\{-\infty\}$ and~$b\in(a,+\infty)$. 
Let~$k\in\mathbb{N}$ and~$\alpha\in(k-1,k)$.
Let~$f\in C^{k,\alpha}_a$, $g\in C^{k,\alpha}_b$ and assume that
\begin{equation}\label{GLaU}
f^{(j)}(b)=g^{(j)}(b)\qquad{\mbox{for all }}j\in\{ 0,\dots,k-1\}.
\end{equation}
Let
$$ \overline{(a,+\infty)}\ni t\mapsto h(t):=\begin{cases}
f(t) & {\mbox{ if }}t\in\overline{(a,b)},\\
g(t) & {\mbox{ if }}t\in(b,+\infty).
\end{cases}$$
Then~$h\in C^{k,\alpha}_a$.
\end{lemma}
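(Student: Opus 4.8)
The plan is to verify directly the two conditions that characterize membership in $C^{k,\alpha}_a$ according to~\eqref{CkAM2}: namely, that $h\in AC^{k-1}\big(\overline{(a,t)}\big)$ for every $t>a$, and that the function $\Theta_{k,\alpha,h,t}$ defined in~\eqref{CkAM1} belongs to $L^1((a,t))$ for every $t>a$. Away from the gluing point $b$ both properties are inherited directly from $f$ and $g$, so the entire issue is to control the behavior across $b$, and this is precisely where the matching hypothesis~\eqref{GLaU} enters.

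For the regularity, I would argue that $h$ is obtained by gluing two $AC^{k-1}$ functions along the common point $b$. Since $f\in C^{k-1}$ to the left of $b$ and $g\in C^{k-1}$ to the right, the conditions~\eqref{GLaU} ensure that $h^{(j)}$ extends continuously across $b$ for every $j\in\{0,\dots,k-1\}$, with $h^{(j)}(b)=f^{(j)}(b)=g^{(j)}(b)$. In particular, for $j\le k-2$ the one-sided derivatives of $h^{(j)}$ at $b$ are $f^{(j+1)}(b)$ and $g^{(j+1)}(b)$, which agree, so $h^{(j)}$ is $C^1$ and hence absolutely continuous near $b$. For the top-order derivative, $h^{(k-1)}$ coincides with the absolutely continuous functions $f^{(k-1)}$ and $g^{(k-1)}$ on the two sides of $b$ and is continuous at $b$; invoking the standard fact that a function which is absolutely continuous on $[a,b]$ and on $[b,t]$ and continuous at $b$ is absolutely continuous on $[a,t]$, I conclude $h\in AC^{k-1}\big(\overline{(a,t)}\big)$. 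Observe that no matching of the $k$th derivatives is required: $h^{(k)}$ may well jump at $b$, which is harmless for absolute continuity of $h^{(k-1)}$.

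For the integrability, fix $t>a$. If $t\le b$ then $h=f$ on $(a,t)$ and the claim is immediate from $f\in C^{k,\alpha}_a$, so suppose $t>b$ and split the integral at $b$. On $(b,t)$ one has $h^{(k)}=g^{(k)}$, so that piece equals $\|\Theta_{k,\alpha,g,t}\|_{L^1((b,t))}$, which is finite because $g\in C^{k,\alpha}_b$. On $(a,b)$ one has $h^{(k)}=f^{(k)}$, and the crucial observation is a comparison of weights: since $\alpha\in(k-1,k)$ the exponent $k-\alpha-1$ is negative, and for $\tau\in(a,b)$ we have $t-\tau>b-\tau>0$, whence $(t-\tau)^{k-\alpha-1}\le(b-\tau)^{k-\alpha-1}$. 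Therefore
\[
\int_a^b |f^{(k)}(\tau)|\,(t-\tau)^{k-\alpha-1}\,d\tau
\le \int_a^b |f^{(k)}(\tau)|\,(b-\tau)^{k-\alpha-1}\,d\tau
=\big\|\Theta_{k,\alpha,f,b}\big\|_{L^1((a,b))},
\]
which is finite by the defining property of $C^{k,\alpha}_a$ applied at the endpoint $b$. Summing the two pieces yields $\Theta_{k,\alpha,h,t}\in L^1((a,t))$.

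The only genuinely delicate point is this weight comparison on the left piece $(a,b)$: a priori $f^{(k)}$ need not be integrable on $(a,b)$ without a weight, especially when $a=-\infty$, and the condition available from $f\in C^{k,\alpha}_a$ carries the weight $(b-\tau)^{k-\alpha-1}$ that blows up as $\tau\to b^-$. The sign of the exponent $k-\alpha-1<0$ is exactly what makes this heavier weight dominate the lighter one $(t-\tau)^{k-\alpha-1}$ that actually appears in $\Theta_{k,\alpha,h,t}$, so that the integrability of $f$ at the endpoint $b$ suffices. Everything else reduces to the routine gluing of absolutely continuous functions.
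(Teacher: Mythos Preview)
Your proposal is correct and follows essentially the same approach as the paper: both verify the two defining conditions of $C^{k,\alpha}_a$ by gluing at $b$, using the matching hypothesis~\eqref{GLaU} to obtain $h\in AC^{k-1}$, and then splitting the $\Theta$-integral at $b$ with the identical weight comparison $(t-\tau)^{k-\alpha-1}\le(b-\tau)^{k-\alpha-1}$ on the left piece. The only difference is presentational: the paper carries out the $AC$ gluing explicitly via the integral representation $h^{(j)}(t_2)-h^{(j)}(t_1)=\int_{t_1}^{t_2}H_j$, whereas you invoke the standard fact that a function absolutely continuous on $[a,b]$ and on $[b,t]$ and continuous at $b$ is absolutely continuous on $[a,t]$; this covers all $j\in\{0,\dots,k-1\}$ uniformly, so your separate $C^1$ remark for $j\le k-2$ is not really needed.
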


\begin{proof} Since~$f\in C^{k-1}\big( \overline{(a,+\infty)}\big)$
and~$g\in C^{k-1}([b,+\infty))$, we obtain from~\eqref{GLaU}
that~$h\in C^{k-1} \big( \overline{(a,+\infty)}\big)$, and, for every~$t\in\overline{(a,+\infty)}$
and~$j\in\{0,\dots,k-1\}$,
$$ h^{(j)}(t)=\begin{cases}
f^{(j)}(t) & {\mbox{ if }}t\in\overline{(a,b)},\\
g^{(j)}(t) & {\mbox{ if }}t\in(b,+\infty).\end{cases}$$
In particular, we see from~\eqref{GLaU} that
\begin{equation}\label{uNA}
h^{(j)}(b)=f^{(j)}(b)=g^{(j)}(b),\qquad{\mbox{for all }}j\in\{0,\dots,k-1\}.
\end{equation}
Using that~$f^{(j)}\in AC\big( \overline{(a,b)}\big)$
for each~$j\in\{0,\dots,k-1\}$, we can write that,
for every~$t_1,t_2\in\overline{(a,b)}$,
$$ f^{(j)}(t_2)-f^{(j)}(t_1)=\int_{t_1}^{t_2} F_j(\tau)\,d\tau,$$
for a suitable Lebesgue integrable function~$F_j$.

Similarly, if~$T>b$, since~$g^{(j)}\in AC([b,T])$, we have that
for every~$t_1,t_2\in [b,T]$,
$$ g^{(j)}(t_2)-g^{(j)}(t_1)=\int_{t_1}^{t_2} G_j(\tau)\,d\tau,$$
for a suitable Lebesgue integrable function~$G_j$.

Then, given~$T>b$, we define
\begin{equation}\label{hj}
H_j(t)=\begin{cases}
F_j(t) & {\mbox{ if }}t\in\overline{(a,b)},\\
G_j(t) & {\mbox{ if }}t\in(b,T].\end{cases}\end{equation}
We have that~$H_j$ is Lebesgue integrable
and, if~$t_1\in\overline{(a,b)}$ and~$t_2\in(b,T]$, recalling~\eqref{uNA}
we see that
\begin{eqnarray*}
h^{(j)}(t_2)-h^{(j)}(t_1)&=&g^{(j)}(t_2)-f^{(j)}(t_1)\\
&=&g^{(j)}(t_2)-g^{(j)}(b)+f^{(j)}(b)-f^{(j)}(t_1)\\
&=& \int^{t_2}_{b} G_j(\tau)\,d\tau+\int_{t_1}^{b} F_j(\tau)\,d\tau\\
&=& \int_{t_1}^{t_2} H_j(\tau)\,d\tau.
\end{eqnarray*}
{F}rom this, we conclude that
\begin{equation}\label{8iJMAhAAAA}
h^{(j)}\in AC\big( \overline{(a,T)}\big)\qquad{\mbox{for all }}j\in\{0,\dots,k-1\}.\end{equation}
Hence, in view of~\eqref{CkAM2}, to complete the proof of the desired
result it remains to check that~$\Theta_{k,\alpha,h,T}\in L^1\big( (a,T)\big)$,
for every~$T>a$, namely that
\begin{equation}\label{8ujANA}
\int_a^{T} \frac{|h^{(k)}(\tau)|}{(T-\tau)^{\alpha-k+1}}\,d\tau<+\infty.
\end{equation}
We remark that here~$h^{(k)}$ is intended
in the Lebesgue sense, being~$h^{(k-1)}\in AC\big( \overline{(a,T)}\big)$,
due to~\eqref{8iJMAhAAAA}. Hence, in the setting of~\eqref{hj},
we have that~$h^{(k)}=H_{k-1}$ and therefore
\begin{equation}
\int_a^{T} \frac{|h^{(k)}(\tau)|}{(T-\tau)^{\alpha-k+1}}\,d\tau=
\int_a^{T} \frac{|H_{k-1}(\tau)|}{(T-\tau)^{\alpha-k+1}}\,d\tau.
\end{equation}
Consequently, if~$T\le b$ we have that
$$ \int_a^{T} \frac{|h^{(k)}(\tau)|}{(T-\tau)^{\alpha-k+1}}\,d\tau=
\int_a^{T} \frac{|F_{k-1}(\tau)|}{(T-\tau)^{\alpha-k+1}}\,d\tau=
\| \Theta_{k,\alpha,f,T}\|_{L^1(a,T)},
$$
which is finite since~$f\in C^{k,\alpha}_a$.

If instead $T>b$, we have that
\begin{eqnarray*}
\int_a^{T} \frac{|h^{(k)}(\tau)|}{(T-\tau)^{\alpha-k+1}}\,d\tau&=&
\int_a^b \frac{|F_{k-1}(\tau)|}{(T-\tau)^{\alpha-k+1}}\,d\tau+
\int_b^{T} \frac{|G_{k-1}(\tau)|}{(T-\tau)^{\alpha-k+1}}\,d\tau\\&\le&
\int_a^b \frac{|F_{k-1}(\tau)|}{(b-\tau)^{\alpha-k+1}}\,d\tau+
\int_b^{T} \frac{|G_{k-1}(\tau)|}{(T-\tau)^{\alpha-k+1}}\,d\tau
\\&=& \| \Theta_{k,\alpha,f,b}\|_{L^1(a,b)}+
\| \Theta_{k,\alpha,g,T}\|_{L^1(b,T)},
\end{eqnarray*}
which are finite since~$f\in C^{k,\alpha}_a$ and~$g\in C^{k,\alpha}_b$.
This completes the proof of~\eqref{8ujANA} and of the desired result.
\end{proof}

With this, we can obtain a
counterpart of Lemma~\ref{AfavaadfeAU}
(which is not explicitly used here, but that can be useful
for further investigations), as follows:

\begin{lemma}\label{uNA.AOKa}
Let~$a\in\mathbb{R}\cup\{-\infty\}$ and~$b\in(a,+\infty)$.
Let~$I\Subset(b,+\infty)$ be an interval.
Let~$k\in\mathbb{N}$ and~$\alpha\in(k-1,k)$, and assume that~$u\in C^{k,\alpha}_b$.

Let also
$$ u_\star(t):=\begin{cases}
u(t) & {\mbox{ if }}t\in[ b,+\infty),\\
\displaystyle\sum_{j=0}^{k-1} \frac{u^{(j)}(b)}{j!}(t-b)^j& {\mbox{ if }}t\in(-\infty, b).
\end{cases}$$
Then, $u_\star\in C^{k,\alpha}_a$
and~$D^\alpha_a u_\star=D^\alpha_b u$ in~$I$.
\end{lemma}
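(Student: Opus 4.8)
The plan is to prove Lemma~\ref{uNA.AOKa} by recognizing that $u_\star$ is exactly the gluing of two functions to which Lemma~\ref{LA2lem} and Lemma~\ref{AfavaadfeAU} apply in succession. First I would define the Taylor polynomial piece
$$ P(t):=\sum_{j=0}^{k-1} \frac{u^{(j)}(b)}{j!}(t-b)^j,$$
which is a polynomial of degree at most $k-1$, hence $P^{(k)}\equiv 0$. I would then invoke Lemma~\ref{LA2lem} with the roles $f:=P$ (defined on $\overline{(a,+\infty)}$, and manifestly in $C^{k,\alpha}_a$ since its $k$th derivative vanishes identically), $g:=u\in C^{k,\alpha}_b$, and the matching condition~\eqref{GLaU} to be verified. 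The key observation making~\eqref{GLaU} hold is that $P^{(j)}(b)=u^{(j)}(b)$ for all $j\in\{0,\dots,k-1\}$, which is immediate from the standard fact that the coefficients of a Taylor polynomial reproduce the derivatives of the function at the base point. With these hypotheses satisfied, Lemma~\ref{LA2lem} yields directly that $u_\star\in C^{k,\alpha}_a$, establishing the first assertion.

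For the second assertion, $D^\alpha_a u_\star=D^\alpha_b u$ in $I$, I would argue that the polynomial extension does not contribute to the Caputo derivative because its $k$th derivative vanishes. Concretely, for any $t\in I\Subset(b,+\infty)$ I would split the defining integral~\eqref{defcap} at the point $b$:
$$ \Gamma(k-\alpha)\,D^\alpha_a u_\star(t)=\int_a^b \frac{u_\star^{(k)}(\tau)}{(t-\tau)^{\alpha-k+1}}\,d\tau
+\int_b^t \frac{u_\star^{(k)}(\tau)}{(t-\tau)^{\alpha-k+1}}\,d\tau.$$
On $(a,b)$ we have $u_\star=P$, so $u_\star^{(k)}=P^{(k)}=0$ there and the first integral vanishes; on $(b,t)$ we have $u_\star=u$, so $u_\star^{(k)}=u^{(k)}$ and the second integral is exactly $\Gamma(k-\alpha)\,D^\alpha_b u(t)$. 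This gives the claimed identity. This step is essentially the same mechanism as in the proof of Lemma~\ref{AfavaadfeAU}, where vanishing of the $k$th derivative on the initial stretch decouples the memory contribution.

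The one point requiring a little care is the treatment of the case $a=-\infty$, since then the lower integral runs from $-\infty$; here I would note that on $(-\infty,b)$ the integrand vanishes identically because $u_\star^{(k)}=P^{(k)}=0$ there, so no convergence issue arises and the splitting is legitimate. I expect the main obstacle to be purely bookkeeping: confirming that the $C^{k,\alpha}_a$ membership of the polynomial piece is correctly checked against the definition~\eqref{CkAM2} (trivial once $P^{(k)}\equiv 0$ is used, since then $\Theta_{k,\alpha,P,T}\equiv 0\in L^1$), and that the derivative-matching condition~\eqref{GLaU} is stated with the right index range $\{0,\dots,k-1\}$. Since both Lemma~\ref{LA2lem} and the integral-splitting technique of Lemma~\ref{AfavaadfeAU} are already available, the proof amounts to assembling these two ingredients rather than developing any new estimate.
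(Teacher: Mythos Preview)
Your proposal is correct and follows essentially the same argument as the paper: apply Lemma~\ref{LA2lem} with $f$ the Taylor polynomial and $g:=u$ to obtain $u_\star\in C^{k,\alpha}_a$, then use $u_\star^{(k)}=0$ on $(a,b)$ to reduce the integral defining $D^\alpha_a u_\star$ to $D^\alpha_b u$. The additional remarks you make (checking $P\in C^{k,\alpha}_a$ and the case $a=-\infty$) are harmless elaborations of the same proof.
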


\begin{proof} We apply Lemma~\ref{LA2lem} with
$$ f(t):=\sum_{j=0}^{k-1} \frac{u^{(j)}(b)}{j!}(t-b)^j,$$
$g(t):=u(t)$, and~$h(t):=u_\star(t)$. Notice that, in this setting,
for each~$j\in\{0,\dots,k-1\}$,
we have that~$f^{(j)}(b)=u^{(j)}(b)=g^{(j)}(b)$, and therefore
condition~\eqref{GLaU} is fulfilled. Hence, the use of Lemma~\ref{LA2lem}
gives that~$u_\star\in C^{k,\alpha}_a$, as desired. In addition,
we have that~$u_\star^{(k)}=0$ in~$(-\infty,b)$ and therefore,
if~$t\in I$,
$$ \int_a^t \frac{u^{(k)}_\star\left(\tau\right)}{(t-\tau)^{\alpha-k+1}} d\tau
=
\int_b^t \frac{u^{(k)}_\star\left(\tau\right)}{(t-\tau)^{\alpha-k+1}} d\tau=
\int_b^t \frac{u^{(k)} \left(\tau\right)}{(t-\tau)^{\alpha-k+1}} d\tau,$$
which says that~$D^\alpha_a u_\star(t)=D^\alpha_b u(t)$.
\end{proof}

\end{appendix}

\begin{bibdiv}
\begin{biblist}

\bib{MR0167642}{book}{
   author={Abramowitz, Milton},
   author={Stegun, Irene A.},
   title={Handbook of mathematical functions with formulas, graphs, and
   mathematical tables},
   series={National Bureau of Standards Applied Mathematics Series},
   volume={55},
   publisher={For sale by the Superintendent of Documents, U.S. Government
   Printing Office, Washington, D.C.},
   date={1964},
   pages={xiv+1046},
   review={\MR{0167642}},
}

\bib{MR3554830}{article}{
   author={Almeida, Ricardo},
   title={A Caputo fractional derivative of a function with respect to
   another function},
   journal={Commun. Nonlinear Sci. Numer. Simul.},
   volume={44},
   date={2017},
   pages={460--481},
   issn={1007-5704},
   review={\MR{3554830}},
   doi={10.1016/j.cnsns.2016.09.006},
}

\bib{MR3379909}{book}{
   author={Ambrosio, Luigi},
   author={Da Prato, Giuseppe},
   author={Mennucci, Andrea},
   title={Introduction to measure theory and integration},
   series={Appunti. Scuola Normale Superiore di Pisa (Nuova Serie) [Lecture
   Notes. Scuola Normale Superiore di Pisa (New Series)]},
   volume={10},
   publisher={Edizioni della Normale, Pisa},
   date={2011},
   pages={xii+187},
   isbn={978-88-7642-385-7},
   isbn={978-88-7642-386-4},
   review={\MR{3379909}},
   doi={10.1007/978-88-7642-386-4},
}

\bib{MR3716924}{article}{
   author={Bucur, Claudia},
   title={Local density of Caputo-stationary functions in the space of
   smooth functions},
   journal={ESAIM Control Optim. Calc. Var.},
   volume={23},
   date={2017},
   number={4},
   pages={1361--1380},
   issn={1292-8119},
   review={\MR{3716924}},
   doi={10.1051/cocv/2016056},
}

\bib{MR2379269}{article}{
   author={Caputo, Michele},
   title={Linear models of dissipation whose $Q$ is almost frequency
   independent. II},
   note={Reprinted from Geophys. J. R. Astr. Soc. {\bf 13} (1967), no. 5,
   529--539},
   journal={Fract. Calc. Appl. Anal.},
   volume={11},
   date={2008},
   number={1},
   pages={4--14},
   issn={1311-0454},
   review={\MR{2379269}},
}

\bib{FUTURO}{article}{
author={Carbotti, Alessandro},
author={Dipierro, Serena},
   author={Valdinoci, Enrico},
title={Local density of solutions
of time and space fractional equations},
journal={In progress},
}

\bib{MR3626547}{article}{
   author={Dipierro, Serena},
   author={Savin, Ovidiu},
   author={Valdinoci, Enrico},
   title={All functions are locally $s$-harmonic up to a small error},
   journal={J. Eur. Math. Soc. (JEMS)},
   volume={19},
   date={2017},
   number={4},
   pages={957--966},
   issn={1435-9855},
   review={\MR{3626547}},
   doi={10.4171/JEMS/684},
}

\bib{DSV1}{article}{
   author={Dipierro, Serena},
   author={Savin, Ovidiu},
   author={Valdinoci, Enrico},
   title={Local approximation of arbitrary functions by solutions of nonlocal equations},
   journal={J. Geom. Anal.},
   date={2018},
   doi={10.1007/s12220-018-0045-z},
}

\bib{MR1918790}{book}{
   author={Samko, Stefan G.},
   title={Hypersingular integrals and their applications},
   series={Analytical Methods and Special Functions},
   volume={5},
   publisher={Taylor \& Francis, Ltd., London},
   date={2002},
   pages={xviii+359},
   isbn={0-415-27268-8},
   review={\MR{1918790}},
}

\bib{MR3072517}{article}{
   author={Tarasov, Vasily E.},
   title={No violation of the Leibniz rule. No fractional derivative},
   journal={Commun. Nonlinear Sci. Numer. Simul.},
   volume={18},
   date={2013},
   number={11},
   pages={2945--2948},
   issn={1007-5704},
   review={\MR{3072517}},
   doi={10.1016/j.cnsns.2013.04.001},
}

\end{biblist} 
\end{bibdiv}
\end{document}